\theoremstyle{definition}
\newtheorem{dfn}{Definition}[section]
\theoremstyle{plain}
\newtheorem{thm}[dfn]{Theorem}
\newtheorem{lem}[dfn]{Lemma}
\newtheorem{cor}[dfn]{Corollary}
\newcommand{\Z}{\mathbb{Z}}
\newcommand{\aut}{\mathrm{Aut}}
\newcommand{\rot}{\mathrm{rot}}
\newcommand{\gArf}{\widehat{\operatorname{Arf}}}
\newcommand{\image}{\operatorname{im}}
\title{The mapping class group orbits in the framings of compact surfaces
\thanks
{AMS subject classifications: Primary 57N05; Secondary 20F38, 57R15.
Keywords: framings, Arf invariant, mapping class groups.
}}
\author{Nariya Kawazumi}
\begin{document}
\maketitle

\begin{abstract}
We compute the mapping class group orbits in the homotopy set 
of framings of a compact connected oriented surface with non-empty boundary.
In the case $g \geq 2$ the computation is some modification 
of Johnson's results \cite{J80a}\cite{J80b} and certain arguments
on the Arf invariant, 
while we need an extra invariant for the genus $1$ case.
In addition, we discuss how this invariant behaves in the relative case, 
which Randal-Williams \cite{RW14} studied for $g \geq 2$. 
\end{abstract}

\begin{center}
Introduction
\end{center}

Let $\Sigma$ be a compact connected oriented smooth ($C^\infty$) 
surface with non-empty boundary. Then the tangent bundle 
$T\Sigma$ is a trivial bundle. Its orientation-preserving 
global trivializations $T\Sigma \overset\cong\to 
\Sigma\times\mathbb{R}^2$ are called 
{\it framings} of the surface $\Sigma$, which play important roles 
in surface topology.  The $\bmod 2$ reduction of a framing can be regarded 
as a spin structure on the surface $\Sigma$. A spin structure on a closed surface
is called a theta characterisic in a classical context, and the mapping class group 
orbits in the set of theta characteristics are described by the Arf invariant
\cite{A41}. \par
We denote by $F(\Sigma)$ the set of 
homotopy classes of framings of $\Sigma$, and fix a Riemannian metric 
$\Vert\cdot\Vert$ on the tangent bundle $\varpi: T\Sigma \to \Sigma$. 
The unit tangent bundle $U\Sigma := \{e \in T\Sigma;\,\, \Vert e\Vert = 1\}
\overset\varpi\to \Sigma$ is a principal $S^1$ bundle over $\Sigma$. 
A framing defines a continuous map $U\Sigma \to S^1$ whose restriction 
to each fiber is homotopic to the identity $1_{S^1}$.
Taking the pull-back of the positive generator of $H^1(S^1; \Z)$, 
we obtain an element of $H^1(U\Sigma; \Z)$. This defines a natural 
embedding $F(\Sigma)\hookrightarrow H^1(U\Sigma; \Z)$.
More precisely, $F(\Sigma)$ is an affine set modeled by the abelian group
$\varpi^*H^1(\Sigma; \Z) (\cong H^1(\Sigma; \Z))$ (See \S2.1).
In particular, the difference $f_1 - f_0$ of two framings $f_0$ and 
$f_1 \in F(\Sigma)$ defines a unique element of $H^1(\Sigma; \Z)$.
\par
In this paper we consider the mapping class group of $\Sigma$ fixing 
the boundary {\it pointwise}
$$
\mathcal{M}(\Sigma) := 
\pi_0\operatorname{Diff}_+
(\Sigma, \mathrm{id}\, \mathrm{on} \,\partial\Sigma)
= \operatorname{Diff}_+
(\Sigma, \mathrm{id}\, \mathrm{on} \,\partial\Sigma)/
\text{isotopy},
$$
which acts on the set $F(\Sigma)$ from the right in a natural way.
If we fix an element $f_0 \in F(\Sigma)$, then the map
$$
k(f_0): \mathcal{M}(\Sigma) \to H^1(\Sigma; \Z), \quad
\varphi \mapsto f_0\circ\varphi - f_0, 
$$
is a twisted cocycle of the group $\mathcal{M}(\Sigma)$.
The cohomology class $k:=[k(f_0)] \in H^1(\mathcal{M}(\Sigma); \linebreak
H^1(\Sigma; \Z))$ does not depend on the choice of $f_0$, 
is called the Earle class \cite{E78} or 
the Chillingworth class \cite{C72a} \cite{C72b} \cite{T92}, and 
generates the cohomology group in the case when
the boundary $\partial\Sigma$ is connected and the genus of $\Sigma$ 
is greater than $1$ \cite{Mo89}.
For the case where the boundary is not connected, see \cite{Ka08}
Theorem 1.A. 
The construction of $k$ stated here is due to M.\ Furuta \cite{Mo97} \S4.
The Morita trace \cite{Mo93} and its refinement, the Enomoto-Satoh trace
\cite{ES14}, are higher analogues of the class $k$. In the author's 
joint paper with Alekseev, Kuno and Naef \cite{AKKN}, we clarify
topological and Lie theoretical meanings of the Enomoto-Satoh trace.
The formality problem of a variant of the Turaev cobracket for an immersed loop 
on the surface, the Enomoto-Satoh trace and the Kashiwara-Vergne problem
in Lie theory are closely related to each other. 
We need the rotation number of the immersed loop with respect to a 
framing to define of this variant of the Turaev cobracket.
This is the reason why we describe the orbit set 
$F(\Sigma)/\mathcal{M}(\Sigma)$ in this paper.\par

The homotopy set $F(\Sigma)$ we study in this paper is 
\textit{absolute}, namely, we allow framings to move on the boundary.
In fact, the rotation number of an immersed loop with respect to a framing $f$
is invariant under any moves of $f$ on the boundary $\partial\Sigma$.
On the other hand, we can consider a \textit{relative} version of the homotopy 
set $F(\Sigma, \delta)$ for a fixed framing on the boundary
$\delta: TS\vert_{\partial\Sigma}\overset\cong\to 
\partial\Sigma\times\mathbb{R}^2$. Here we make framings on $\partial \Sigma$
equal the given datum $\delta$. We need the latter version to define 
the rotation number of an arc connecting two boundary components.
Randal-Williams \cite{RW14} computes the mapping class group orbits 
in the set of ($r$-)spin structures for any genus in the relative version 
and those in the homotopy set $F(\Sigma, \delta)$ for $g\geq 2$.
It is interesting that the (generalized) Arf invariant is defined in any 
$F(\Sigma, \delta)$ \cite{RW14}, while it is not defined in some absolute 
cases as in \S1 of this paper. In particular, the computations in this paper
are different from those by Randal-Williams \cite{RW14}. 
In the case $g \geq 2$, the formality of the Turaev cobracket holds good 
for any choice of a framing. But, if $g=1$, it depends on the choice of 
a framing, so that the formality problem is reduced to the computation 
of the mapping class group orbits in the set $F(\Sigma)$.
It is controlled by an extra invariant $\tilde A(f)$ introduced in this paper
(Corollary \ref{cor:realize1}). All these results are proved in \cite{AKKN17b}.
\par
Anyway, following Whitney \cite{W37}, we consider 
the rotation number $\rot_f(\ell) \in \mathbb{Z}$ of 
a smooth immersion $\ell: S^1 \to \Sigma$ with 
respect to a framing $f \in F(\Sigma)$. 
We number the boundary components as $\partial\Sigma
= \coprod^n_{j=0}\partial_j\Sigma$. 
The rotation numbers $\rot_f(\partial_j\Sigma)$, $0 \leq j\leq n$, 
are invariant under the action of the group $\mathcal{M}(\Sigma)$.
Here we endow each $\partial_j\Sigma$ with the orientation induced 
by $\Sigma$. By the Poincar\'e-Hopf theorem (Lemma \ref{lem:PH}), we have
$$
\sum^n_{j=1}\rot_f(\partial_j\Sigma) = \chi(\Sigma) = 1-2g-n.
$$
\par\smallskip

Our description of the orbit set $F(\Sigma)/\mathcal{M}(\Sigma)$ 
depends on the genus $g(\Sigma)$ of the surface $\Sigma$.
First we consider the case $g(\Sigma)=0$. Clearly we have
\begin{lem}[Equation \eqref{eq:genus0}]
 Suppose $g(\Sigma)=0$. Then two framings 
$f_1$ and $f_2 \in F(\Sigma)$ are homotopic 
to each other, if and only if
$$
\rot_{f_1}(\partial_j\Sigma) = \rot_{f_2}(\partial_j\Sigma)
$$
for any $0 \leq j \leq n$.
\end{lem}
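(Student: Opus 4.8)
The plan is to exploit the affine structure on $F(\Sigma)$ recalled in the introduction: the difference $f_1-f_2$ is a well-defined class in $H^1(\Sigma;\Z)$, and $f_1$ and $f_2$ are homotopic if and only if $f_1-f_2=0$. The ``only if'' direction is immediate, since for any smooth immersion $\ell\colon S^1\to\Sigma$ the rotation number $\rot_f(\ell)$ depends only on the homotopy class of $f$ (and the regular homotopy class of $\ell$); specialising to $\ell=\partial_j\Sigma$ shows that homotopic framings have equal boundary rotation numbers. So the content is entirely in the ``if'' direction.

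The key input is the change-of-framing formula
$$
\rot_{f_1}(\ell)-\rot_{f_2}(\ell)=\langle f_1-f_2,[\ell]\rangle ,
$$
where $[\ell]\in H_1(\Sigma;\Z)$ is the homology class of $\ell$ and $\langle\,\cdot\,,\,\cdot\,\rangle$ is the Kronecker pairing; this follows directly from the description of the embedding $F(\Sigma)\hookrightarrow H^1(U\Sigma;\Z)$ and of $\rot_f(\ell)$ as the pull-back of the positive generator of $H^1(S^1;\Z)$ along a lift of $\ell$ to $U\Sigma$, and I would record it in \S2 before stating the lemma. Granting it, suppose $\rot_{f_1}(\partial_j\Sigma)=\rot_{f_2}(\partial_j\Sigma)$ for all $0\le j\le n$; then $f_1-f_2$ pairs trivially with every boundary class $[\partial_j\Sigma]$.

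Now use $g(\Sigma)=0$: the surface is a $2$-sphere with $n+1$ open disks removed, so it deformation retracts onto a wedge of $n$ circles, $H_1(\Sigma;\Z)$ is free of rank $n$ with basis $[\partial_1\Sigma],\dots,[\partial_n\Sigma]$, and $[\partial_0\Sigma]=-\sum_{j=1}^{n}[\partial_j\Sigma]$ (so the condition at $j=0$ is already forced by the others, consistently with the Poincar\'e--Hopf identity of Lemma \ref{lem:PH}). Since $H_1(\Sigma;\Z)$ is free, the evaluation map $H^1(\Sigma;\Z)\to\hom(H_1(\Sigma;\Z),\Z)$ is an isomorphism, so a class pairing trivially with the generating set $\{[\partial_j\Sigma]\}_{j=1}^{n}$ must vanish. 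Hence $f_1-f_2=0$ and $f_1$ is homotopic to $f_2$.

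I do not expect a genuine obstacle here — the ``clearly'' in the statement is justified once the change-of-framing formula is in place. The one substantive observation is that in genus $0$ the boundary curves already generate $H_1(\Sigma;\Z)$; this is exactly what fails for $g\ge 1$, and is the reason the higher-genus cases require the Johnson homomorphism, Arf-invariant arguments, and (for $g=1$) the extra invariant $\tilde A$ developed in the later sections.
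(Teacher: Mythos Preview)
Your argument is correct and matches the paper's approach: the paper packages the change-of-framing formula into Lemma~\ref{lem:betti} (the map $f\mapsto(\rot_f(\ell_i))_i$ is a bijection whenever $\{[\ell_i]\}$ is a free basis of $H_1(\Sigma)$), and then obtains equation~\eqref{eq:genus0} by observing that for $g=0$ the boundary curves $\partial_1\Sigma,\dots,\partial_n\Sigma$ supply such a basis---exactly your reasoning. One cosmetic point: the later sections use Johnson's arguments on spin structures and BP-maps, not the Johnson homomorphism itself, so you may want to adjust that aside.
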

\par\smallskip
Next we discuss the positive genus case: $g = g(\Sigma) \geq 1$. 
Choose a system of simple closed curves $\{\alpha_i, 
\beta_i\}^g_{i=1}$ on $\Sigma$ as in Figure 1. 
\begin{figure}
\begin{center}
\input{fig1-80percent.tex}
\end{center}
\label{fig1}
\caption{}
\end{figure}
The Arf invariant of the $\bmod 2$ reduction of $f$ is defined 
in the case where 
all the numbers $\rot_f(\partial_j\Sigma)$, $0 \leq j \leq n$, are odd.
Then the Arf invariant 
of the spin structure is defined by 
$$
\operatorname{Arf}(f) \equiv \sum^g_{i=1}
(\rot_f(\alpha_i)+1)(\rot_f(\beta_i)+1) \pmod{2}.
$$
\par\smallskip
In the case $g(\Sigma)\geq 2$, we have the following.
\begin{thm}[Theorem \ref{thm:genus2}] 
Suppose $g(\Sigma) \geq 2$, and $f_1, f_2 \in F(\Sigma)$.
Then $f_1$ and $f_2$ belong to the same 
$\mathcal{M}(\Sigma)$-orbit, if and only if
\begin{enumerate}
\item[(i)] $
\rot_{f_1}(\partial_j\Sigma) = \rot_{f_2}(\partial_j\Sigma)
$
for any $0 \leq j \leq n$.
\item[(ii)] If all the numbers 
$\rot_{f_1}(\partial_j\Sigma) = \rot_{f_2}(\partial_j\Sigma)$,
$0 \leq j \leq n$, 
are odd, then $\operatorname{Arf}(f_1) = \operatorname{Arf}(f_2)$.
\end{enumerate}
\end{thm}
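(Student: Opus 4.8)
The ``only if'' direction is immediate. Each $\rot_f(\partial_j\Sigma)$ is an $\mathcal M(\Sigma)$-invariant homotopy invariant of $f$, and when all of them are odd the mod~$2$ reduction $f\bmod 2$ is a spin structure whose associated $\Z/2$-quadratic refinement of the intersection pairing on $H_1(\Sigma;\Z/2)$ vanishes on the radical (spanned by the boundary classes), so it descends to a quadratic refinement of a nondegenerate alternating form, whose Arf invariant is $\operatorname{Arf}(f)$; since $\mathcal M(\Sigma)$ acts on this datum through the symplectic group and Arf is a symplectic invariant, $\operatorname{Arf}(f)$ is $\mathcal M(\Sigma)$-invariant. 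So I turn to the converse, which I would organize along the two layers of structure on $F(\Sigma)$: the reduction $F(\Sigma)\to\mathrm{Spin}(\Sigma)$, $f\mapsto f\bmod 2$, and, inside a fixed spin structure, the residual free and transitive action of $2H^1(\Sigma;\Z)$. The plan is to prove (1) that hypotheses (i)--(ii) already force $f_1\bmod 2$ and $f_2\bmod 2$ to lie in one $\mathcal M(\Sigma)$-orbit, and then (2) that two framings inducing the same spin structure and the same boundary rotation numbers lie in one $\mathcal M(\Sigma)$-orbit.

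For (1) I would use Johnson's theory. Let $q_i$ be the $\Z/2$-quadratic refinement attached to $f_i\bmod 2$; then $q_i([\partial_j\Sigma])\equiv \rot_{f_i}(\partial_j\Sigma)+1\pmod 2$, so by~(i) the forms $q_1,q_2$ agree on the radical $R$ of the intersection pairing. If some $\rot_{f_i}(\partial_j\Sigma)$ is even then $q_i|_R\ne 0$; in that case the Arf invariant is not defined, and the standard classification of $\Z/2$-quadratic forms shows that all refinements with the given (fixed) values on $R$ lie in one orbit of the group of automorphisms of $H_1(\Sigma;\Z)$ that preserve the intersection pairing and fix each $[\partial_j\Sigma]$. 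If all the $\rot_{f_i}(\partial_j\Sigma)$ are odd then $q_i|_R=0$, $\operatorname{Arf}(q_i)$ is defined, and the orbits under that same group are exactly the two Arf classes. Since $\mathcal M(\Sigma)$ surjects onto that group and realizes its (pull-back) action on spin structures, in either case there is $\psi\in\mathcal M(\Sigma)$ with $(f_2\circ\psi)\bmod 2=f_1\bmod 2$; replacing $f_2$ by $f_2\circ\psi$, I may assume $f_1$ and $f_2$ induce the same spin structure.

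For (2), where $g\ge 2$ enters, write $f_2-f_1=2c$ with $c\in H^1(\Sigma;\Z)$; hypothesis~(i) forces $c$ to annihilate every $[\partial_j\Sigma]$, and by Poincar\'e--Lefschetz duality these $c$ form a free group $A$ of rank $2g$ generated by Poincar\'e duals of simple closed curves. The key input is the effect of a Dehn twist on a framing, $f\circ t_\gamma - f=\rot_f(\gamma)\,\mathrm{PD}[\gamma]$ (Chillingworth), which shows that a product of Dehn twists translates $f_1$ by a $\Z$-linear combination of such terms, and that $t_\gamma$ preserves the spin structure exactly when $\rot_f(\gamma)$ is even, i.e.\ when $q_1([\gamma])=1$. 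Using $g\ge 2$ one checks that the classes of simple closed curves with $q_1([\gamma])=1$ span a complement of $R$ in $H_1(\Sigma;\Z/2)$ --- this is the point that fails in genus~$1$ --- so their Poincar\'e duals generate $A$, and that a nonseparating simple closed curve in a prescribed primitive class can be chosen with rotation number $\pm 2$ (again because, for $g\ge 2$, its complement contains a handle). Then I would expand $2c$ as a $\Z$-combination of such terms $2\,\mathrm{PD}[\gamma]$ and realize them one curve at a time: after each twist the current framing still induces the original spin structure, so a curve with $q_1=1$ and rotation number $\pm 2$ realizing the next term continues to exist, and after finitely many twists one obtains $\varphi\in\mathcal M(\Sigma)$ with $f_1\circ\varphi-f_1=2c$, that is, $f_1\circ\varphi=f_2$.

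The hard part is step~(2). Everything mod~$2$ is controlled by Johnson's correspondence between spin structures and quadratic forms together with the elementary linear algebra of those forms; the real difficulty is to upgrade the mod~$2$ conclusion to the \emph{integral} identity $f_1\circ\varphi=f_2$, which forces one to pin down the rotation numbers of the twisting curves on the nose, not merely their parities, while simultaneously keeping the spin structure and all the boundary rotation numbers fixed. It is precisely the room needed to carry out this bookkeeping that requires $g\ge 2$; when $g=1$ it is unavailable, and the leftover ambiguity is exactly what the extra invariant $\tilde A$ of Corollary~\ref{cor:realize1} records.
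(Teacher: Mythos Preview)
Your two-step outline---first bring the underlying spin structures into one $\mathcal{M}(\Sigma)$-orbit via Johnson's classification, then realize the residual class $2c\in 2H^1(\Sigma;\Z)$ by explicit mapping classes---is exactly the paper's strategy, and your step~(1) is the paper's Theorem~\ref{thm:mcgspin}. The implementation of step~(2) differs. The paper does not look for single curves of rotation number $\pm 2$; instead it uses Johnson's bounding-pair maps: for each $i$ one finds (using $g\ge 2$) a curve $\hat\alpha_i$ disjoint from $\alpha_i$ and cobounding with it a subsurface $\cong\Sigma_{1,2}$, and then $t_{\hat\alpha_i}^{-1}t_{\alpha_i}$ (or its inverse) shifts $\xi(f)$ by exactly $\pm 2[\alpha_i]\cdot$, since Poincar\'e--Hopf on that $\Sigma_{1,2}$ forces $|\rot_f(\hat\alpha_i)-\rot_f(\alpha_i)|=2$. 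This works for every basis curve $\alpha_i,\beta_i$ regardless of the value of $q_1$ on it, so no spanning argument is needed and one never has to manufacture a curve of prescribed rotation number. Your single-twist route can also be made to work, but two of your assertions need more than you give them: the claim that primitive classes with $q_1=1$ generate $A$ over $\Z$ does \emph{not} follow from the mod~$2$ spanning statement you cite (mod~$2$ spanning does not imply integral spanning in general), although it is true for $g\ge 2$ by a short direct argument; and the existence of a simple closed representative of rotation number exactly $\pm 2$ in a given class is precisely the handle-crossing construction that produces the paper's $\hat\alpha_i$. Both gaps are fillable, and the bounding-pair formulation simply packages them into one clean move.
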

The proof given in \S2.2 is some modification of 
Johnson's arguments \cite{J80a} \cite{J80b}. 
\par\smallskip
The genus $1$ case is different from the others. 
We need to introduce an invariant $\tilde A(f) \in \mathbb{Z}_{\geq0}$
for $f \in F(\Sigma)$. It is defined to be the generator 
of the ideal in $\mathbb{Z}$ generated by the set $\{\rot_f(\gamma); 
\,\,\text{$\gamma$ is a non-separating simple closed curve on $\Sigma$}\}$.
We have 
$$
\operatorname{Arf}(f) \equiv \tilde A(f) + 1 \pmod{2}.
$$
On the other hand, if $g \geq 2$, we have $\tilde A(f) = 1$ for any $f \in 
F(\Sigma)$ (Lemma \ref{lem:genA}).
\begin{thm}[Theorem \ref{genusone}]
Suppose $g(\Sigma) = 1$, and $f_1, f_2 \in F(\Sigma)$.
Then $f_1$ and $f_2$ belong to the same 
$\mathcal{M}(\Sigma)$-orbit, if and only if
\begin{enumerate}
\item[(i)] $
\rot_{f_1}(\partial_j\Sigma) = \rot_{f_2}(\partial_j\Sigma)
$
for any $0 \leq j \leq n$.
\item[(ii)] $\tilde A(f_1) = \tilde A(f_2)\in \mathbb{Z}_{\geq0}$.
\end{enumerate}
\end{thm}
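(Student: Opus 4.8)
\textbf{The plan} is to prove the two implications separately. Necessity is immediate: a diffeomorphism $\varphi$ that is the identity on $\partial\Sigma$ satisfies $\rot_{f\circ\varphi}(\ell)=\rot_f(\varphi\circ\ell)$ for every immersed loop $\ell$, so taking $\ell=\partial_j\Sigma$ gives (i); and since $\varphi$ permutes the non-separating simple closed curves bijectively, the sets $\{\rot_{f\circ\varphi}(\gamma)\}_\gamma$ and $\{\rot_f(\gamma)\}_\gamma$ coincide, hence generate the same ideal of $\mathbb Z$ and $\tilde A(f\circ\varphi)=\tilde A(f)$, which is (ii). The content is the converse.

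For sufficiency I would first turn the problem into linear algebra. Since $F(\Sigma)$ is affine over $H^1(\Sigma;\mathbb Z)$ and, for $g=1$, this group is free of rank $n+2$ with basis dual to $\{\alpha_1,\beta_1,[\partial_1\Sigma],\dots,[\partial_n\Sigma]\}$ (the curves of Figure 1, with $[\partial_0\Sigma]$ minus the sum of the rest, and the $[\partial_j\Sigma]$ spanning the radical $R$ of the intersection form), the assignment $f\mapsto(\rot_f(\alpha_1),\rot_f(\beta_1),\rot_f(\partial_1\Sigma),\dots,\rot_f(\partial_n\Sigma))$ is a bijection onto $\mathbb Z^{n+2}$. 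Fix a base framing $f_0$ having the boundary rotation numbers common to $f_1,f_2$ and with $\rot_{f_0}(\alpha_1)=\rot_{f_0}(\beta_1)=0$; then condition (i) says $f_1,f_2$ lie in the subset parametrised by $(x,y):=(\rot_f(\alpha_1),\rot_f(\beta_1))\in\mathbb Z^2$, and it suffices to identify the $\mathcal M(\Sigma)$-orbits in this $\mathbb Z^2$ with the level sets of $\tilde A$. The action is computed from the Chillingworth--Johnson formula $f\circ T_c-f=\rot_f(c)\,\widehat{[c]}$, where $\widehat{[c]}\in H^1(\Sigma;\mathbb Z)$ is the algebraic-intersection functional $x\mapsto[c]\cdot x$. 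Because $\rot_{f_0}(\alpha_1)=\rot_{f_0}(\beta_1)=0$, the twists $T_{\alpha_1},T_{\beta_1}$ fix $f_0$ and act on $(x,y)$ by $(x,y)\mapsto(x,x+y)$ and $(x,y)\mapsto(x-y,y)$, i.e.\ by the standard generators of $\mathrm{SL}_2(\mathbb Z)$; so the orbit of $(x,y)$ already contains every $(u,v)$ with $\gcd(u,v)=\gcd(x,y)$.

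It remains to compute $\tilde A(f)$ in these coordinates and to produce the moves changing $\gcd(x,y)$. Running the Chillingworth formula, together with the behaviour of $\rot_f$ under band sums (Whitney), over the non-separating curves carried by a one-holed-torus neighbourhood of $\alpha_1\cup\beta_1$ (the $(p,q)$-curves, $\gcd(p,q)=1$) and over their band sums with boundary-parallel curves, one shows that $\{\rot_f(\gamma):\gamma\text{ non-separating}\}$ is a full ideal $\tilde A(f)\mathbb Z$ and that $\tilde A(f)=\gcd(\rot_f(\alpha_1),\rot_f(\beta_1),d)$ for an integer $d$ depending only on the common boundary rotation numbers; when all $\rot_f(\partial_j\Sigma)=-1$ one has $d=0$, $\tilde A(f)=\gcd(\rot_f(\alpha_1),\rot_f(\beta_1))$, and one recovers $\operatorname{Arf}(f)\equiv\tilde A(f)+1\pmod 2$. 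The Dehn twists along the boundary-band-summed non-separating curves act on $(x,y)$, on top of $\mathrm{SL}_2(\mathbb Z)$, by translations through multiples of $d$ in the primitive directions $(-q,p)$; bringing $(x,y)$ to $(\gcd(x,y),0)$ by $\mathrm{SL}_2(\mathbb Z)$ and then applying these translations, a short argument identifies the $\mathcal M(\Sigma)$-orbit of $(x,y)$ with $\{(u,v):\gcd(u,v,d)=\gcd(x,y,d)\}$. Since this invariant equals $\tilde A(f)$ and $d$ is shared by $f_1,f_2$, condition (ii) puts $f_1$ and $f_2$ in one $\mathcal M(\Sigma)$-orbit.

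\textbf{The main obstacle} is the computation in the previous paragraph: showing that the rotation numbers of non-separating simple closed curves fill an entire ideal rather than a sparser set, and identifying the correction $d$ --- that is, controlling the rotation number of a non-separating curve whose homology class has a non-trivial component in the radical $R$, together with a band-sum/continued-fraction bookkeeping to see that the twists along such curves generate enough translations. This is the genuinely genus-one effect: for $g\ge2$ the radical contributes nothing and $\tilde A\equiv1$ (Lemma \ref{lem:genA}), whereas here $\tilde A$ can take any non-negative value. A minor point needing care is the precise normalisation of the Chillingworth--Johnson formula, on which the linearity of the $T_{\alpha_1},T_{\beta_1}$ action rests.
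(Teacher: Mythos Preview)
Your approach is correct and essentially the same as the paper's: the paper likewise reduces to the action on the pair $(\rot_f(\alpha),\rot_f(\beta))$, uses that $t_\alpha,t_\beta$ generate the standard $SL_2(\mathbb{Z})$-action, and obtains the extra translations by $\nu_j$ via the products $t_\alpha^{-1}t_{\alpha^{(j)}}$ with $\alpha^{(j)}$ the band sum of $\alpha$ and $\partial_j\Sigma$, then finishes by the Euclidean algorithm; your ``main obstacle'' is exactly Lemma~\ref{lem:bf}, which identifies $\tilde A(f)=\gcd(\rot_f(\alpha),\rot_f(\beta),\nu_0,\dots,\nu_n)$ (so your $d=\gcd(\nu_j)$). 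The only cosmetic difference is that you phrase the conclusion as ``orbit $=\{(u,v):\gcd(u,v,d)=\gcd(x,y,d)\}$'' while the paper phrases it as ``every $f$ can be brought to the canonical form $(\tilde A(f),0)$''.
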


For the sake of non-experts on topology who are interested only in the 
Kashiwara-Vergne problem, 
this paper is self-contained except the results by Johnson \cite{J80a} and \S2.4.
In particular, we will give an elementary proof of the Poincar\'e-Hopf
theorem on the surface $\Sigma$ (Lemma \ref{lem:PH}).
In \S1, following Johnson \cite{J80a}, 
we study the mapping class orbits in the set of spin structures
on any compact surface $\Sigma$ with non-empty boundary. 
Generalities on framings are discussed in \S2.1.
Our computation for the case $g(\Sigma) \geq 2$ in \S2.2 
is some modification of Johnson's paper \cite{J80b}.
We need some extra invariant $\tilde A(f)$ for the case $g(\Sigma) =1$
in \S2.3. It is introduced in the end of \S2.1. In \S2.4, we prove that
the invariant $\tilde A(f)$ and the generalized Arf invariant introduced in \cite{RW14}
classify the mapping class group orbits 
in the relative genus $1$ case (Theorem \ref{relzero}).\par

In this paper we denote by $H_1(-)$ and $H^1(-)$ 
the first $\Z$-(co)homology 
groups, and
by $H_1(-)^{(2)}$ and $H^1(-)^{(2)}$ 
the first $\Z/2$-(co)homology groups.
On $H_1(\Sigma)$ and $H_1(\Sigma)^{(2)}$, we have 
the (algebraic) intersection forms $\cdot: H_1(\Sigma)^{\otimes 2}
\to \Z$, $a\otimes b \mapsto a\cdot b$, and 
$\cdot: (H_1(\Sigma)^{(2)})^{\otimes 2}
\to \Z/2$, $a\otimes b \mapsto a\cdot b$.\par
By the classification of surfaces, any compact connected 
oriented smooth surface $\Sigma$ is classified by the genus and 
the number of the boundary components. We denote by $\Sigma_{g,n+1}$
a compact connected oriented smooth surface of genus $g$ with 
$n+1$ boundary components for $g,n \geq 0$. 
It is uniquely determined up to diffeomorphism.
Throughout this paper, we fix a system of simple closed curves 
$\{\alpha_i, \beta_i\}^g_{i=1}$ on the surface $\Sigma_{g,n+1}$
shown in Figure 1. By $\Sigma_{g,0}$ we mean a closed connected 
oriented surface of genus $g$.
\par
\bigskip
This paper is a byproduct of the author's joint work with
Anton Alekseev, Yusuke Kuno and Florian Naef.
In particular, it has its origin in Alekseev's question to the author.
First of all the author thanks all of them for helpful discussions.
Furthermore Kuno kindly prepared all the figures in this paper.
After the first draft of this paper was uploaded at the arXiv, 
Oscar Randal-Williams let the author know his results in \cite{RW14}.
The author thanks him for informing about them.
The author also thanks Andrew Putman for his comments on this work.
The present research is partially supported by the Grant-in-Aid for Scientific Research 
(S) (No.24224002) and (B) (No.15H03617) 
from the Japan Society for Promotion of Sciences. 
\tableofcontents
\section{Spin structures}

In this section, following Johnson \cite{J80a}, 
we compute the mapping class group orbits 
in the set of spin structures on any compact connected oriented 
surface $\Sigma$ with non-empty boundary $\partial\Sigma$.
\par
A spin structure on $\Sigma$ is, by definition, an unramified double
covering of the unit tangent bundle $U\Sigma$ whose restriction 
to each fiber is non-trivial. In a natural way, the set (of isomorphism 
classes) of such double coverings is isomorphic to the complement
$H^1(U\Sigma)^{(2)}\setminus H^1(\Sigma)^{(2)}$ in the exact sequence 
\begin{equation}
0 \to H^1(\Sigma)^{(2)} \overset{\varpi^*}\longrightarrow H^1(U\Sigma)^{(2)} 
\overset{\iota^*}\longrightarrow \Z/2 \to 0
\label{seq:HUSigma2}
\end{equation}
associated with the fibration $S^1\overset{\iota}\hookrightarrow U\Sigma
\overset{\varpi}\to \Sigma$. 
Here we identify $H^1(\Sigma)^{(2)}$ with its image under $\varpi^*$. 
The canonical lifting 
\begin{equation}
H_1(\Sigma)^{(2)} \to H_1(U\Sigma)^{(2)}, \quad a \mapsto \widetilde{a},
\end{equation}
is constructed in the same way as the original one for a closed surface 
by Johnson \cite{J80a}. 
In particular, if $\gamma: \coprod^m_{i=1} S^1 \to \Sigma$ is a smooth embedding,
then we have
\begin{equation}
\widetilde{[\gamma]} = \Vec{\gamma} + m\iota_*(1) \in H_1(U\Sigma)^{(2)},
\end{equation}
where $\Vec{\gamma}: \coprod^m_{i=1} S^1 \to U\Sigma$ is the (normalized)
velocity vector of $\gamma$, and $\iota_*$ is the dual of $\iota^*$ 
in the sequence (\ref{seq:HUSigma2}). As was shown in Theorem 1B in \cite{J80a},
we have $\widetilde{(a+b)} = \widetilde{a} + \widetilde{b} + (a\cdot b)\iota_*(1)$
for $a, b \in H_1(\Sigma)^{(2)}$. 
For any $\xi$ in the complement $H^1(U\Sigma)^{(2)}\setminus H^1(\Sigma)^{(2)}$, 
a quadratic form $\omega_\xi: 
H_1(\Sigma)^{(2)} \to \Z/2$ is defined by $\omega_\xi(a) 
:= \langle\xi, \widetilde{a}\rangle \in \Z/2$ for $a \in H_1(\Sigma)^{(2)}$.
By a {\it quadratic form} 
we mean a function $H_1(\Sigma)^{(2)} \to \Z/2$ satisfying 
$\omega(a+b) = \omega(a) + \omega(b)+ a\cdot b$ for any 
$a$ and $b \in H_1(\Sigma)^{(2)}$. We denote by $\operatorname{Quad}(\Sigma)$ 
the set of quadratic forms on on $H_1(\Sigma)^{(2)}$. 
We remark $\omega_2 - \omega_1: H_1(\Sigma)^{(2)}
\to \Z/2$ is a homomorphism, so that it can be regarded as 
an element of $H^1(\Sigma)^{(2)}$ for any $\omega_1$ and $\omega_2
\in \operatorname{Quad}(\Sigma)$. More precisely, 
the group $H^1(\Sigma)^{(2)}$ acts on the set $\operatorname{Quad}(\Sigma)$
freely and transitively, i.e., 
the set $\operatorname{Quad}(\Sigma)$ is an affine set 
modeled by the abelian group $H^1(\Sigma)^{(2)}$. 
The mapping class group $\mathcal{M}(\Sigma)$ acts on the sets
$H^1(U\Sigma)^{(2)}\setminus H^1(\Sigma)^{(2)}$ and 
$\operatorname{Quad}(\Sigma)$
in a natural way. The map $\xi \mapsto \omega_\xi$ 
defines an $\mathcal{M}(\Sigma)$-equivariant isomorphism between 
the sets $H^1(U\Sigma)^{(2)}\setminus H^1(\Sigma)^{(2)}$ and $\operatorname{Quad}(\Sigma)$.\par
For the rest of this section we compute 
the mapping class group orbits 
in the set of quadratic forms, $\operatorname{Quad}(\Sigma)$. 
We begin by recalling some elementary facts on the (co)homology 
of the surface $\Sigma$. The cohomology exact sequence 
\begin{equation}
H^1(\Sigma, \partial\Sigma)^{(2)} \overset{j^*}\longrightarrow
H^1(\Sigma)^{(2)} \overset{i^*}\longrightarrow
H^1(\partial\Sigma)^{(2)} 
\label{eq:exact2}
\end{equation}
is compatible with the action of the mapping class group
$\mathcal{M}(\Sigma)$. In particular, the subgroup 
$\image j^* = \ker i^* \subset H^1(\Sigma)^{(2)}$ is 
stable under the action of $\mathcal{M}(\Sigma)$, and 
equals the image of the map $H_1(\Sigma)^{(2)} \to 
H^1(\Sigma)^{(2)}$, $x \mapsto x\cdot$, from the 
Poincar\'e-Lefschetz duality. 
\begin{lem}\label{lem:SCC2} 
Any homology class in $H_1(\Sigma)^{(2)}$ is represented 
by a simple closed curve.
\end{lem}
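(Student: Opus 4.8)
The plan is to realize the class by an embedded closed $1$-submanifold of $\Sigma$ (a disjoint union of circles) and then to cut down its number of components one at a time, by band sums, until a single simple closed curve remains. For the starting representative, read off the basis of $H_1(\Sigma)^{(2)}$ from Figure 1 and write the given class as $\sum_{i=1}^{g}\bigl(a_i[\alpha_i]+b_i[\beta_i]\bigr)+\sum_{j=1}^{n}c_j[\partial_j\Sigma]$ with all coefficients in $\{0,1\}$; in the $i$-th handle realize $a_i[\alpha_i]+b_i[\beta_i]$ by the empty curve, by $\alpha_i$, by $\beta_i$, or (when $a_i=b_i=1$) by the simple closed curve obtained from $\alpha_i\cup\beta_i$ by resolving their unique transverse intersection point in the way that joins $\alpha_i$ and $\beta_i$ into a single circle. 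Taking all these curves pairwise disjoint, in the interior, and disjoint from interior push-offs of the $\partial_j\Sigma$ with $c_j=1$, their union is an embedded closed $1$-manifold $C$ with $[C]=x$ and at most $g+n$ components.

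For the reduction, suppose $C$ has $k\ge 2$ components. The key geometric point is that some two components of $C$ lie in the closure of one and the same component of $\Sigma\setminus C$: otherwise, for any component $c$ of $C$, the union of $c$ with the (one or two) components of $\Sigma\setminus C$ adjacent to it would be both open and closed in $\Sigma$, hence all of $\Sigma$ by connectedness, forcing every other component of $C$ to coincide with $c$, which is absurd. Given such $c\neq c'$ lying in the closure of a component $R$ of $\Sigma\setminus C$, choose an embedded arc in $\overline R$ from a point of $c$ to a point of $c'$ whose interior misses $C$, and perform the untwisted band sum of $c$ and $c'$ along a band contained in a regular neighborhood of this arc. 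This replaces $c\sqcup c'$ by a single embedded circle $c''$ with $[c'']=[c]+[c']$ in $H_1(\Sigma)^{(2)}$, so the resulting $C'$ is embedded, has $k-1$ components, and still represents $x$. Iterating, one arrives at a single simple closed curve representing $x$ (and should the process ever empty $C$, then $x=0$, which is represented by a small contractible circle).

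The one place that needs genuine care --- the ``hard part'' here --- is the claim that two components of a multicurve on a connected surface always bound a common complementary region, since that is exactly what lets the connecting arc, and hence the band, be chosen disjoint from the remaining components $C\setminus(c\cup c')$, keeping $C'$ embedded; the connectedness argument above settles it. What is left is routine: a band sum of two disjoint embedded circles along an embedded arc is again an embedded circle, and in $H_1(\Sigma)^{(2)}$ it represents the sum of the two classes regardless of how the band is attached.
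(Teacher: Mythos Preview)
Your argument is correct and follows essentially the same route as the paper: realize the class by simple closed curves coming from the handle and planar parts of $\Sigma$, then combine them into a single curve by band sums (what the paper phrases as ``connected sum'' of the curves on the $\Sigma_{1,0}$ and $\Sigma_{0,n+1}$ pieces). Your version is more explicit---in particular, you spell out the complementary-region argument that guarantees the band can be chosen disjoint from the remaining components---whereas the paper compresses all of this into a three-line sketch.
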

\begin{proof} The four elements in $H_1(\Sigma_{1,0})^{(2)}$ 
are represented by simple closed curves. Similarly all elements 
in $H_1(\Sigma_{0,n+1})^{(2)}$ are represented by 
simple closed curves. Any element in $H_1(\Sigma_{g,n+1})^{(2)}$
can be represented by the connected sum of some of these 
elements. This proves the lemma.
\end{proof}
For any $a \in H_1(\Sigma)^{(2)}$ we introduce a map 
$T_a: H_1(\Sigma)^{(2)} \to H_1(\Sigma)^{(2)}$ defined
by $x \mapsto x - (x\cdot a)a$. If $\gamma$ represents 
the element $a$, the map $T_a$ is induced by the right-handed
Dehn twist along $\gamma$ denoted by $t_\gamma\in \mathcal{M}(\Sigma)$. 
In particular, $T_a$ respects
the intersection form. We denote by $G(\Sigma) \subset 
\operatorname{Aut}(H_1(\Sigma)^{(2)})$ the subgroup 
generated by $\{T_a; \,\, a \in H_1(\Sigma)^{(2)}\}$.
From the Dehn-Lickorish theorem and Lemma \ref{lem:SCC2},
it equals the image of the mapping class group 
$\mathcal{M}(\Sigma)$ in the group 
$\operatorname{Aut}(H_1(\Sigma)^{(2)})$. 
In particular, the $\mathcal{M}(\Sigma)$-orbits in 
the set $\operatorname{Quad}(\Sigma)$ are the same as
the $G(\Sigma)$-orbits.
\par
For a quadratic form $\omega: H_1(\Sigma)^{(2)}
\to \Z/2$, we define a map $m_\omega: G(\Sigma) 
\to H^1(\Sigma)^{(2)}$ by $S \mapsto m_\omega(S) := 
\omega S - \omega$. Then we have
\begin{equation}\label{eq:cocyclem}
m_\omega(S_1S_2) = m_\omega(S_1)S_2 + m_\omega(S_2)
\end{equation}
for any $S_1$ and $S_2 \in G(\Sigma)$. One can compute 
$\langle m_\omega(T_a), x\rangle = \omega(T_ax) 
- \omega(x) = \omega(x-(x\cdot a)a) - \omega(x)
= (x\cdot a)\omega(a) + (x\cdot a)^2 = (x\cdot a)(\omega(a)
+ 1)$ for $a, x \in H_1(\Sigma)^{(2)}$. This means
\begin{equation}\label{eq:omegaa}
m_\omega(T_a) = (\omega(a)+1)a\cdot \in \image j^* 
\subset H^1(\Sigma)^{(2)}
\end{equation}
Hence we obtain a $1$-cocycle $m_\omega: G(\Sigma) 
\to \image j^* (\subset H^1(\Sigma)^{(2)})$. 

\begin{thm}\label{thm:orbitA}
Let $\omega_1$ and $\omega_2: H_1(\Sigma)^{(2)} \to \Z/2$
be quadratic forms. Then $\omega_1$ and $\omega_2$ belong 
to the same $\mathcal{M}(\Sigma)$-orbit if and only if
$$
\exists x \in H_1(\Sigma)^{(2)} \,\,\text{s.t.} \quad
\omega_1(x) = 0, \quad \omega_2-\omega_1 = x\cdot
\in \image j^*
\eqno{(\sharp)}
$$
\end{thm}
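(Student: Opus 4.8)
The plan is to work throughout with the group $G(\Sigma)$ in place of $\mathcal{M}(\Sigma)$, which is legitimate since the $\mathcal{M}(\Sigma)$-orbits and the $G(\Sigma)$-orbits on $\operatorname{Quad}(\Sigma)$ were seen to coincide. The statement to be proved then becomes: there exists $S \in G(\Sigma)$ with $\omega_2 = \omega_1 S$ if and only if $(\sharp)$ holds. I would prove the two directions separately, expecting the ``if'' direction to be essentially immediate while the ``only if'' direction carries the (modest) content.

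For the ``if'' direction, suppose $x \in H_1(\Sigma)^{(2)}$ satisfies $\omega_1(x) = 0$ and $\omega_2 - \omega_1 = x\cdot$. By Lemma \ref{lem:SCC2} I may represent $x$ by a simple closed curve, so $T_x \in G(\Sigma)$, and formula \eqref{eq:omegaa} gives $m_{\omega_1}(T_x) = (\omega_1(x)+1)\,x\cdot = x\cdot$. Hence $\omega_1 T_x = \omega_1 + m_{\omega_1}(T_x) = \omega_1 + x\cdot = \omega_2$, so $\omega_2$ lies in the $G(\Sigma)$-orbit of $\omega_1$.

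For the ``only if'' direction, I would induct on the length $k$ of a word $S = T_{a_1} T_{a_2} \cdots T_{a_k}$ representing the given element of $G(\Sigma)$, proving the assertion ``$\omega_2 = \omega_1 S \Rightarrow (\sharp)$'' for all pairs of quadratic forms simultaneously. The case $k = 0$ is trivial with $x = 0$. For the inductive step, write $S = T_{a_1} S'$ with $S'$ of length $k-1$, set $\omega' := \omega_1 T_{a_1}$ so that $\omega_2 = \omega' S'$, and apply the inductive hypothesis to the pair $(\omega', \omega_2)$: there is $y \in H_1(\Sigma)^{(2)}$ with $\omega'(y) = 0$ and $\omega_2 - \omega' = y\cdot$. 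By \eqref{eq:omegaa} one has $\omega' = \omega_1 + (\omega_1(a_1)+1)\,a_1\cdot$. If $\omega_1(a_1) = 1$, then $\omega' = \omega_1$ and $x := y$ does the job. If $\omega_1(a_1) = 0$, then $\omega' = \omega_1 + a_1\cdot$, hence $\omega_2 - \omega_1 = y\cdot + a_1\cdot = (y + a_1)\cdot$; moreover $\omega'(y) = \omega_1(y) + a_1\cdot y = 0$ gives $\omega_1(y + a_1) = \omega_1(y) + \omega_1(a_1) + a_1\cdot y = 2(a_1\cdot y) = 0$, so $x := y + a_1$ works.

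The main point that needs care is the organization of this induction: one should peel off the \emph{leftmost} transvection, so that the reference form changes from $\omega_1$ to $\omega' = \omega_1 T_{a_1}$ while the isotropic witness $x$ acquires the new summand $a_1$ exactly in the case $\omega_1(a_1) = 0$; then the short computation above — feeding the identity $\omega_1(y) = a_1\cdot y$ (equivalent to $\omega'(y) = 0$) into the quadratic relation — is what guarantees that the accumulated class remains $\omega_1$-isotropic. Everything else, namely the cocycle identity \eqref{eq:cocyclem}, the vanishing $a\cdot a = 0$, and the fact that $x\cdot$ lies in $\image j^*$, is routine.
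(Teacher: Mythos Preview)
Your proof is correct and uses the same ingredients as the paper: formula \eqref{eq:omegaa} for $m_\omega(T_a)$, and the quadratic-form identity to check that the accumulated witness stays $\omega_1$-isotropic. The only organizational difference is that the paper packages the ``only if'' direction by first showing that $(\sharp)$ defines an equivalence relation (your inductive step is exactly the transitivity computation) and then checking it on the generators $T_a$, whereas you unwind this into an induction on word length; the mathematical content is identical.
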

\begin{proof} We denote by $\omega_1\sim\omega_2$ 
the assertion that $\omega_1$ and $\omega_2$ satisfy 
the condition ($\sharp$), and begin the proof 
by checking that the relation $\sim$ is an equivalence relation 
on the set $\operatorname{Quad}(\Sigma)$. 
The reflexivity $\omega\sim \omega$ follows from $\omega(0) = 0$. 
If $x \in H_1(\Sigma)^{(2)}$ satisfies $\omega_1(x) = 0$, 
then we have $(\omega_1 + x\cdot)(x) = \omega_1(x) + x\cdot x
= 0$, which proves the symmetry: $(\omega_1\sim \omega_2)
\Rightarrow (\omega_2\sim \omega_1)$. Assume $\omega_1
\sim \omega_2$ and $\omega_2\sim\omega_3$. This means
there exist $x_1$ and $x_2 \in H_1(\Sigma)^{(2)}$ such that 
$\omega_1(x_1) = \omega_2(x_2) = 0$, $\omega_2-\omega_1
= x_1\cdot$ and $\omega_3-\omega_2= x_2\cdot$. 
Then we have $\omega_3-\omega_1 = (x_1+x_2)\cdot$ 
and $\omega_1(x_1+x_2) = \omega_1(x_1) + x_1\cdot x_2
+ \omega_1(x_2) = \omega_1(x_1) + \omega_2(x_2) = 0$.
Hence we obtain $\omega_1\sim\omega_3$.
This proves the transitivity.\par
Next we assume $\omega_2 = \omega_1T_a$ for some $a \in 
H_1(\Sigma)^{(2)}$. Then, by the formula (\ref{eq:omegaa}), 
we have $\omega_2-\omega_1 = m_{\omega_1}(T_a) 
= (\omega_1(a)+1)a\cdot$, while $\omega_1((\omega_1(a)+1)a)
= (\omega_1(a)+1)\omega_1(a) = 0$. This implies $\omega_1
\sim \omega_1T_a$. The relation $\sim$ is an equivalence 
relation, and $G(\Sigma)$ is generated by $T_a$'s.
Hence, if $\omega_1$ and $\omega_2$ belong 
to the same $G(\Sigma)$-orbit, then we have $\omega_1
\sim \omega_2$. \par
Conversely, if there exists some $x \in H_1(\Sigma)^{(2)}$
such that $\omega_1(x) = 0$ and $\omega_2-\omega_1
= x\cdot$. Then we have $\omega_1T_x - \omega_1
= m_{\omega_1}(T_x) = (\omega_1(x)+1)x\cdot
= x\cdot = \omega_2-\omega_1$, so that $\omega_2
= \omega_1T_x$. In particular, 
$\omega_1$ and $\omega_2$ belong 
to the same $G(\Sigma)$-orbit.\par
This completes the proof of the theorem.
\end{proof}

Now consider the inclusion homomorphism $i_*: H_1(\partial\Sigma)^{(2)}
\to H_1(\Sigma)^{(2)}$. Any $\omega \in \operatorname{Quad}(\Sigma)$ 
restricts to a homomorphism on $H_1(\Sigma)^{(2)}$ via the homomorphism
$i_*$, since the intersection form vanishes on $i_*H_1(\partial\Sigma)^{(2)}$. 
Hence we have the restriction map
\begin{equation}\label{eq:restquad}
i^*: \operatorname{Quad}(\Sigma) \to H^1(\partial\Sigma)^{(2)}, \quad
\omega \mapsto i^*\omega = \omega\circ i_*.
\end{equation}
The kernel $\ker i_*$ is spanned by the $\Z/2$-fundamental class
$[\partial\Sigma]_2 \in H_1(\partial\Sigma)^{(2)}$. Hence, if 
$h \in H^1(\partial\Sigma)^{(2)}$ satisfies $h[\partial\Sigma]_2 = 0$, 
then it induces a homomorphism on $i_*H_1(\partial\Sigma)^{(2)}$,
and extended to the element of $H^1(\Sigma)^{(2)}$ satisfying 
$h([\alpha_i]) = h([\beta_i]) = 0$ for any $1 \leq i \leq g$. 
Here $\alpha_i$ and $\beta_i$ are the simple closed curves 
shown in Figure 1. Moreover we define a map $\omega^{0,h}: 
H_1(\Sigma)^{(2)} \to \Z/2$ by 
\begin{equation}\label{eq:omegaz}
\omega^{0,h}(x) := \sum^g_{i=1}(x\cdot [\alpha_i])(x\cdot[\beta_i]) 
+h(x)
\end{equation}
for $x \in H_1(\Sigma)^{(2)}$. 
It is easy to check $\omega^{0,h}$ is a quadratic form, and 
$i^*\omega^{0,h} = h$. 
If a quadratic form $\omega \in \operatorname{Quad}(\Sigma)$ 
satisfies $i^*\omega = 0 \in H^1(\partial\Sigma)^{(2)}$, 
then the Arf invariant $\operatorname{Arf}(\omega)$ is defined by 
\begin{equation}\label{eq:defArf}
\operatorname{Arf}(\omega) := \sum^g_{i=1}
\omega([\alpha_i])\omega([\beta_i]) \in \Z/2
\end{equation}
\cite{A41}.
For any $x \in H_1(\Sigma)^{(2)}$, we have 
\begin{equation}\label{eq:xArf}
\operatorname{Arf}(\omega^{0,0} + x\cdot) = \sum^g_{i=1}
(x\cdot[\alpha_i])(x\cdot[\beta_i]) = \omega^{0,0}(x).
\end{equation}
In particular, the Arf invariant $\operatorname{Arf}$ is $G(\Sigma)$-invariant, 
namely, we have $\operatorname{Arf}(\omega S) = \operatorname{Arf}(\omega)$
for any $\omega \in (i^*)^{-1}(0)$ and $S \in G(\Sigma)$. 
In fact, there are $x_0$ and $x_1 \in H_1(\Sigma)^{(2)}$ such that 
$\omega = \omega^{0,0} + x_0\cdot$, $\omega S- \omega = x_1\cdot$ 
and $\omega(x_1) = 0$. Then we have $\operatorname{Arf}(\omega S) 
= \omega^{0,0}(x_0+x_1) 
= \omega^{0,0}(x_0) + x_0\cdot x_1 + \omega^{0,0}(x_1)
= \operatorname{Arf}(\omega) + \omega(x_1) 
= \operatorname{Arf}(\omega)$.\par
Now recall $m_\omega(G(\Sigma)) \subset \ker(i^*: 
H^1(\Sigma)^{(2)} \to H^1(\partial\Sigma)^{(2)})$ and 
$G(\Sigma)$ is the image of $\mathcal{M}(\Sigma)$ 
in $\aut(H_1(\Sigma)^{(2)})$. Hence the restriction map
$i^*$ induces the map
\begin{equation}\label{eq:rho2}
\rho_2: \operatorname{Quad}(\Sigma)/\mathcal{M}(\Sigma)
\to H^1(\partial\Sigma)^{(2)}, \quad
\omega \bmod G(\Sigma) \mapsto i^*\omega.
\end{equation}
\begin{thm}\label{thm:mcgspin}
For any $h \in H^1(\partial\Sigma)^{(2)}$, the cardinality of 
the set ${\rho_2}^{-1}(h)$ is given by
$$
\sharp {\rho_2}^{-1}(h) = 
\begin{cases}
0, & \text{if $h[\partial\Sigma]_2 \neq 0$,}\\
1, & \text{if $h[\partial\Sigma]_2 = 0$ and ($h \neq 0$ or $g = 0$),}\\
2, & \text{if $h= 0$ and $g \geq 1$.}\\
\end{cases}
$$
In the last case, the two orbits are distinguished by the Arf invariant
$\operatorname{Arf}: (i^*)^{-1}(0) \to \Z/2$.
\end{thm}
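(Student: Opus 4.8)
The plan is to compute every fibre of $\rho_2$ directly from condition $(\sharp)$ of Theorem~\ref{thm:orbitA}, using the explicit quadratic forms $\omega^{0,h}$ of \eqref{eq:omegaz} as a concrete model for $\operatorname{Quad}(\Sigma)$. First I would dispose of the empty case: since $i_*[\partial\Sigma]_2 = 0$ in $H_1(\Sigma)^{(2)}$, every $\omega\in\operatorname{Quad}(\Sigma)$ satisfies $(i^*\omega)([\partial\Sigma]_2) = \omega(i_*[\partial\Sigma]_2) = \omega(0) = 0$, so ${\rho_2}^{-1}(h) = \emptyset$ whenever $h[\partial\Sigma]_2 \neq 0$. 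When $h[\partial\Sigma]_2 = 0$ the fibre is nonempty since $i^*\omega^{0,h} = h$; and because $\operatorname{Quad}(\Sigma)$ is affine over $H^1(\Sigma)^{(2)}$ with $\ker\big(i^*\colon H^1(\Sigma)^{(2)}\to H^1(\partial\Sigma)^{(2)}\big) = \image j^* = \{\,x\cdot : x\in H_1(\Sigma)^{(2)}\,\}$, every element of $(i^*)^{-1}(h)\subset\operatorname{Quad}(\Sigma)$ has the form $\omega^{0,h}+x\cdot$ with $x\in H_1(\Sigma)^{(2)}$, two such being equal exactly when the difference of the $x$'s lies in the radical $R$ of the intersection form.

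The core step is to rewrite the orbit relation on this model. Let $x_1,x_2\in H_1(\Sigma)^{(2)}$. By Theorem~\ref{thm:orbitA}, $\omega^{0,h}+x_1\cdot$ and $\omega^{0,h}+x_2\cdot$ lie in the same $\mathcal{M}(\Sigma)$-orbit iff there is $y$ with $(\omega^{0,h}+x_1\cdot)(y)=0$ and $y\cdot=(x_2-x_1)\cdot$, i.e.\ $y=(x_2-x_1)+r$ for some $r\in R$. Expanding by the quadratic identity and using $z\cdot r=0$ for $r\in R$, $x_1\cdot x_1=0$, and $\omega^{0,h}(r)=h(r)$ (valid since $r\cdot[\alpha_i]=r\cdot[\beta_i]=0$), the condition $(\omega^{0,h}+x_1\cdot)(y)=0$ collapses, after the two $x_1\cdot x_2$ terms cancel by symmetry, to
\begin{equation*}
\omega^{0,h}(x_1)+\omega^{0,h}(x_2)\in h(R)\subseteq\Z/2 .
\end{equation*}

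Now the case analysis. If $h\neq 0$ and $h[\partial\Sigma]_2=0$, then $h$ descends to a nonzero homomorphism on $H_1(\partial\Sigma)^{(2)}/\langle[\partial\Sigma]_2\rangle\cong i_*H_1(\partial\Sigma)^{(2)}\subseteq R$, so $h(R)=\Z/2$ and the displayed condition is automatic; hence the fibre is a single orbit and $\sharp{\rho_2}^{-1}(h)=1$. The same holds when $g=0$, since then the intersection form vanishes identically, $x\cdot=0$ for all $x$, and the fibre is one point. If $h=0$ and $g\geq 1$, then $h(R)=0$ and the condition becomes $\omega^{0,0}(x_1)=\omega^{0,0}(x_2)$; as $\omega^{0,0}$ descends to $H_1(\Sigma)^{(2)}/R$ this is a well-defined relation on the fibre, so the orbits are exactly the level sets of $x\mapsto\omega^{0,0}(x)=\operatorname{Arf}(\omega^{0,0}+x\cdot)$ (using \eqref{eq:xArf}), a function taking both values of $\Z/2$ for $g\geq 1$ (e.g.\ at $x=0$ and $x=[\alpha_1]+[\beta_1]$); hence $\sharp{\rho_2}^{-1}(0)=2$ and the two orbits are separated by the Arf invariant.

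The main obstacle is the middle step: transporting $(\sharp)$ faithfully to the model $H_1(\Sigma)^{(2)}/R$ and keeping the bookkeeping with the radical straight --- in particular checking $\omega^{0,h}|_R = h|_R$ and verifying that shifting the witness $y$ by elements of $R$ lets $h(r)$ range over all of $\Z/2$ precisely when $h\neq 0$, which is exactly the dichotomy between the ``one orbit'' and ``two orbits'' outcomes.
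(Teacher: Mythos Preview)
Your argument is correct and follows essentially the same approach as the paper's: both use the criterion $(\sharp)$ of Theorem~\ref{thm:orbitA}, take $\omega^{0,h}$ as a basepoint so that the fibre is $\{\omega^{0,h}+x\cdot\}$, and exploit the freedom to adjust the witness in $(\sharp)$ by an element of the radical $R=i_*H_1(\partial\Sigma)^{(2)}$. The only difference is organizational: you first distill the orbit relation into the uniform criterion $\omega^{0,h}(x_1)+\omega^{0,h}(x_2)\in h(R)$ and then read off the three cases from whether $h(R)$ is $0$ or $\Z/2$, whereas the paper treats each case separately by constructing explicit witnesses (e.g.\ in the $h\neq0$ case it picks $x_1\in H_1(\partial\Sigma)^{(2)}$ with $h(x_1)=\omega(x_0)$ and uses $x_0+x_1$, which is precisely your ``shift by $r\in R$'').
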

\begin{proof} (0) If $h[\partial\Sigma]_2 \neq 0$, we have 
$(i^*)^{-1}(h) = \emptyset$ since $i_*[\partial\Sigma]_2 = 0$.\par
(1) Suppose $h[\partial\Sigma]_2 = 0$ and $g = 0$. 
Then $(i^*)^{-1}(h) = \{\omega^{0,h}\}$ is a one-point set.\par
Next suppose $h[\partial\Sigma]_2 = 0$, $h \neq 0$ and $g \geq 1$. 
Then $\omega^{0,h} \in (i^*)^{-1}(h) \neq \emptyset$. 
For any $\omega \in (i^*)^{-1}(h)$ we have $\omega - \omega^{0,h}
\in \ker i^* = \image j^*$, so that $\omega - \omega^{0,h}
= x_0\cdot \in H^1(\Sigma)^{(2)}$ for some $x_0 \in H_1(\Sigma)^{(2)}$.
Since $h \neq 0$, we have $\omega(x_0) = h(x_1)$ for some $x_1 
\in H_1(\partial\Sigma)^{(2)}$. 
Then $(x_0+x_1)\cdot = x_0\cdot = \omega - \omega^{0,h}$ and 
$\omega(x_0+x_1) = \omega(x_0) + x_0\cdot x_1 + \omega(x_1)
= h(x_1) + 0 + h(x_1) = 0$. By Theorem \ref{thm:orbitA}, 
we have $\omega = \omega^{0,h}S$ for some $S \in G(\Sigma)$.
This proves $\sharp {\rho_2}^{-1}(h) = 1$.\par
(2) Suppose $h = 0$ and $g \geq 1$. 
Then $\omega^{0,0} \in (i^*)^{-1}(0) \neq \emptyset$, and 
we have $\omega^{0,0}(x_0) = 1$ for some $x_0 \in H_1(\Sigma)^{(2)}$.
For any $\omega \in (i^*)^{-1}(0)$ 
there exists some $x \in H_1(\Sigma)^{(2)}$ 
such that $\omega - \omega^{0,0} = x\cdot \in H^1(\Sigma)^{(2)}$.
If $\omega^{0,0}(x) = \operatorname{Arf}(\omega) = 0$, 
then, by Theorem \ref{thm:orbitA},
we have $\omega = \omega^{0,0}S$ for some $S \in G(\Sigma)$.
On the other hand, 
if $\omega^{0,0}(x) = \operatorname{Arf}(\omega) = 1$, 
then we have $\omega - (\omega^{0,0} + x_0\cdot) = 
(x-x_0)\cdot$ and $(\omega^{0,0} + x_0\cdot)(x-x_0) 
= \omega^{0,0}(x-x_0) + x_0\cdot x = 
\omega^{0,0}(x) - \omega^{0,0}(x_0) = 0$.
This implies $\omega = (\omega^{0,0} + x_0\cdot)S$ 
for some $S \in G(\Sigma)$.
This proves $\sharp {\rho_2}^{-1}(0) = 2$.\par
This completes the proof of the theorem.
\end{proof}

As was proved by Randal-Williams in \cite{RW14} Theorem 2.9, 
the cardinality of the mapping class group orbit sets 
in the set of spin structures
for the relative version 
is always $2$, and does not depend on the boundary value.
In particular, the (generalized) Arf invariant can be defined in any cases.
The situation is similar for framings in the case $g \geq 2$ 
(Theorem \ref{thm:genus2}).

\section{Framings}
\subsection{Generalities}
Let $\Sigma$ be a compact connected oriented smooth surface 
with non-empty boundary as before. 
In this paper, we denote by $F(\Sigma)$ the set of homotopy classes 
of {\it framings}, i.e., orientation-preserving global trivializations
$T\Sigma \overset\cong\to \Sigma\times\mathbb{R}^2$ of the tangent 
bundle $T\Sigma$. In this paper, the composite of such an trivialization and the 
second projection,  $T\Sigma\overset\cong\to \Sigma\times\mathbb{R}^2
\overset{\operatorname{pr}_2}\to \mathbb{R}^2$, is also called a framing.
The group $[\Sigma, S^1] = H^1(\Sigma) = H^1(\Sigma; \Z)$ acts on 
the set $F(\Sigma)$ freely and transitively. In fact, the difference of any two framings
gives a continuous map $\Sigma \to GL^+(2; \mathbb{R}) \simeq S^1$. 
The mapping class group $\mathcal{M}(\Sigma)$ acts on the set $F(\Sigma)$ 
from the right in a natural way. 
\par
Consider the inclusion map $\iota: S^1 \hookrightarrow U\Sigma$
and the projection  $\varpi: U\Sigma \to \Sigma$ as in the preceding section.
Then we have $\mathcal{M}(\Sigma)$-equivariant exact sequences
\begin{eqnarray}
&& 0 \to \Z \overset{\iota_*}\longrightarrow H_1(U\Sigma) \overset{\varpi_*}\longrightarrow H^1(\Sigma)\to 0, \quad\text{and}
\label{seq:HUSigma}\\
&& 0 \to H^1(\Sigma) \overset{\varpi^*}\longrightarrow H^1(U\Sigma)
\overset{\iota^*}\longrightarrow \Z \to 0
\label{seq:coHUSigma}
\end{eqnarray}
in the integral (co)homology. 
The group $H^1(\Sigma)$ obviously acts on the inverse image $(\iota^*)^{-1}(1)$ of 
$1 \in \Z$ freely and transitively. For a framing $f \in F(\Sigma)$ 
we denote by $\xi(f) \in H^1(U\Sigma)$ 
the pull-back of the positive generator of $H^1(S^1)$ 
by the map $f: U\Sigma \to S^1$. It is clear that $\iota^*\xi(f) = 1 \in \Z$. 
Then the map $F(\Sigma) \to (\iota^*)^{-1}(1)$, $f\mapsto \xi(f)$, is 
equivariant under the actions of the groups $\mathcal{M}(\Sigma)$ and
$H^1(\Sigma)$. In particular, it is an $\mathcal{M}(\Sigma)$-equivariant
isomorphism $F(\Sigma) \cong (\iota^*)^{-1}(1)$, by which we identify
these two sets with each other.\par
An immersion $\ell: S^1 \to \Sigma$ lifts to its (normalized)
velocity vector $\Vec{\ell}: S^1 \to U\Sigma$, $t \mapsto
\dot{\ell}(t)/\Vert\dot{\ell}(t)\Vert$. 
The rotation number of $\ell$ with respect to a framing $f$ is defined by
\begin{equation}\label{eq:defrot}
\rot_f\ell := \langle\xi(f), [\Vec{\ell}]\rangle
= \deg(f\circ \Vec{\ell}: S^1 \to S^1) \in \Z
\end{equation}
\cite{W37}. 
For any $\varphi \in \mathcal{M}(\Sigma)$ we have
\begin{equation}\label{eq:actrot}
\rot_{f\circ\varphi}(\ell) = \rot_f(\varphi\circ\ell).
\end{equation}
\begin{lem}\label{lem:betti}
 If $\ell_i: S^1 \to \Sigma$, $1 \leq i \leq b_1 = b_1(\Sigma)$, 
is an immersion, and the set $\{[\ell_i]\}^{b_1}_{i=1}$ constitutes 
a free basis of $H_1(\Sigma)$, then the map 
$$
F(\Sigma) \to \Z^{b_1}, \quad
f \mapsto (\rot_f(\ell_i))^{b_1}_{i=1}
$$
is a bijection.
\end{lem}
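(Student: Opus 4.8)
The plan is to recognize the map in the statement as an isomorphism of torsors over $H^1(\Sigma)$. Since $\partial\Sigma\neq\emptyset$, the surface $\Sigma$ is homotopy equivalent to a finite wedge of circles; in particular $H_1(\Sigma)\cong\Z^{b_1}$ is free and $H^1(\Sigma)\cong\hom(H_1(\Sigma),\Z)\cong\Z^{b_1}$. Recall from \S2.1 that $f\mapsto\xi(f)$ identifies $F(\Sigma)$ with $(\iota^*)^{-1}(1)\subset H^1(U\Sigma)$, that $H^1(\Sigma)$ acts on $(\iota^*)^{-1}(1)$ freely and transitively via $\varpi^*$ in the sequence \eqref{seq:coHUSigma} (so that $c$ sends $\xi(f)$ to $\xi(f)+\varpi^*c$), and that $F(\Sigma)\neq\emptyset$ because $T\Sigma$ is trivial. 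Thus $F(\Sigma)$ is a torsor over $H^1(\Sigma)$, just as $\Z^{b_1}$ is a torsor over itself.

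First I would compute how the $H^1(\Sigma)$-action changes rotation numbers. If $f'\in F(\Sigma)$ is obtained from $f$ by the action of $c\in H^1(\Sigma)$, then $\xi(f')=\xi(f)+\varpi^*c$. Since the velocity lift $\Vec{\ell_i}$ of $\ell_i$ satisfies $\varpi\circ\Vec{\ell_i}=\ell_i$, we have $\varpi_*[\Vec{\ell_i}]=[\ell_i]$ in $H_1(\Sigma)$, and so by the defining formula \eqref{eq:defrot} and the naturality of the Kronecker pairing,
\[
\rot_{f'}(\ell_i)=\langle\xi(f)+\varpi^*c,\,[\Vec{\ell_i}]\rangle=\rot_f(\ell_i)+\langle c,\varpi_*[\Vec{\ell_i}]\rangle=\rot_f(\ell_i)+\langle c,[\ell_i]\rangle
\]
for each $1\leq i\leq b_1$. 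Hence, under the identifications above, the map $f\mapsto(\rot_f(\ell_i))_i$ is equivariant over the group homomorphism
\[
\phi\colon H^1(\Sigma)\longrightarrow\Z^{b_1},\qquad c\longmapsto(\langle c,[\ell_i]\rangle)_{i=1}^{b_1}.
\]

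Finally, because $\{[\ell_i]\}_{i=1}^{b_1}$ is a free $\Z$-basis of $H_1(\Sigma)$ and $H^1(\Sigma)=\hom(H_1(\Sigma),\Z)$, the homomorphism $\phi$ is exactly evaluation on this basis, hence an isomorphism. An $H^1(\Sigma)$-equivariant map between a torsor over $H^1(\Sigma)$ and a torsor over $\Z^{b_1}$ that covers an isomorphism of the structure groups is automatically a bijection: fixing any $f_0\in F(\Sigma)$ with image $v_0=(\rot_{f_0}(\ell_i))_i$, each $v\in\Z^{b_1}$ equals $v_0+\phi(c)$ for a unique $c$, and then the framing obtained from $f_0$ by acting with $c$ is the unique preimage of $v$. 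This gives the lemma. I do not expect a genuine obstacle here; the only point that needs care is the displayed transformation formula, i.e.\ that translating $\xi(f)$ by $\varpi^*c$ realizes the $H^1(\Sigma)$-action on $F(\Sigma)$ and shifts each $\rot_f(\ell_i)$ by $\langle c,[\ell_i]\rangle$, and this is immediate from the material recalled in \S2.1.
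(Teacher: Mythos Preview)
Your argument is correct and is essentially the same as the paper's, just packaged differently: the paper compresses everything into the single observation that $\{[\Vec{\ell_i}]\}_{i=1}^{b_1}\cup\{\iota_*(1)\}$ is a free basis of $H_1(U\Sigma)$, from which the bijection $(\iota^*)^{-1}(1)\cong\Z^{b_1}$ is immediate by dualizing, whereas you spell out the same splitting of \eqref{seq:coHUSigma} as an equivariance/torsor statement.
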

\begin{proof} Then the set $\{[\Vec{\ell_i}]\}^{b_1}_{i=1}\cup
\{\iota_*(1)\}$ constitutes a free basis of $H_1(U\Sigma)$.
\end{proof}
The mod $2$ reduction of $\xi(f)$, which we denote by 
$\xi_2(f) \in H^1(U\Sigma)^{(2)}$, is a spin structure on the surface $\Sigma$.
We write simply $\omega_f := \omega_{\xi_2(f)}: H_1(\Sigma)^{(2)} \to 
\Z/2$ for the corresponding quadratic form. 
\begin{lem}\label{lem:omegarot} For any smooth embedding 
$\ell: S^1 \to \Sigma$, we have 
$$
\omega_f([\ell]) = \rot_f(\ell) + 1\in \Z/2.
$$
\end{lem}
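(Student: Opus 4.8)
The plan is to unwind the definitions on both sides and identify the two quantities in $\Z/2$ directly. Recall that $\omega_f([\ell]) := \langle \xi_2(f), \widetilde{[\ell]}\rangle$, where $\widetilde{[\ell]} \in H_1(U\Sigma)^{(2)}$ is the canonical lift of the homology class $[\ell] \in H_1(\Sigma)^{(2)}$. Since $\ell$ is a smooth embedding (i.e.\ $m=1$ in the notation of the excerpt), the formula $\widetilde{[\gamma]} = \Vec{\gamma} + m\,\iota_*(1)$ specializes to $\widetilde{[\ell]} = [\Vec{\ell}] + \iota_*(1)$ in $H_1(U\Sigma)^{(2)}$. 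So the left-hand side is $\langle \xi_2(f), [\Vec{\ell}] + \iota_*(1)\rangle = \langle \xi_2(f), [\Vec{\ell}]\rangle + \langle \xi_2(f), \iota_*(1)\rangle$.

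Now I would evaluate the two terms. For the first term, $\langle \xi_2(f), [\Vec{\ell}]\rangle$ is by definition the mod $2$ reduction of $\langle \xi(f), [\Vec{\ell}]\rangle = \rot_f(\ell)$ (using $\xi_2(f)$ is the mod $2$ reduction of $\xi(f)$ and naturality of the pairing under reduction of coefficients). For the second term, $\langle \xi_2(f), \iota_*(1)\rangle = \langle \iota^*\xi_2(f), 1\rangle$; and $\iota^*\xi(f) = 1 \in \Z$ as noted just before \eqref{eq:defrot}, so its mod $2$ reduction is $\iota^*\xi_2(f) = 1 \in \Z/2$, giving $\langle \xi_2(f), \iota_*(1)\rangle = 1$. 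Adding, $\omega_f([\ell]) = \rot_f(\ell) + 1 \in \Z/2$, which is the claim.

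The only genuine subtlety — the main thing to get right rather than a deep obstacle — is the compatibility of the canonical lifting $a \mapsto \widetilde{a}$ with the velocity-vector lift of an embedded curve, i.e.\ justifying $\widetilde{[\ell]} = [\Vec{\ell}] + \iota_*(1)$. This is precisely the formula displayed in \S1 of the excerpt (constructed following Johnson \cite{J80a}), so I would simply invoke it; if one wanted to be fully self-contained one checks that $\Vec{\ell} + \iota_*(1)$ has the required defining property of the canonical lift (it lies over $[\ell]$ under $\varpi_*$ and pairs correctly against spin structures), but that is already recorded earlier. Everything else is a routine chase through the definition of $\rot_f$ in \eqref{eq:defrot}, the definition of $\omega_\xi$, and the mod $2$ reduction compatibilities, so I do not expect any real difficulty beyond bookkeeping.
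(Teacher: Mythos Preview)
Your proof is correct and follows essentially the same approach as the paper: invoke the canonical-lift formula $\widetilde{[\ell]} = [\Vec{\ell}] + \iota_*(1)$ for an embedded curve, pair with $\xi_2(f)$, and identify the two summands as $\rot_f(\ell)\bmod 2$ and $1$. The paper's proof is slightly terser but identical in substance.
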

\begin{proof} Recall the canonical lifting in \cite{J80a} is given by 
$\widetilde{[\ell]} = [\Vec{\ell}] + \iota_*(1) \in H_1(U\Sigma)^{(2)}$. 
Hence we have 
$$
\omega_f([\ell]) = \langle\xi_2(f), \widetilde{[\ell]}\rangle 
= \langle\xi_2(f), [\Vec{\ell}]\rangle + 1
= \rot_f(\ell) + 1 \in \Z/2.
$$
This proves the lemma.
\end{proof}
The following is a straight-forward consequence of the Poincar\'e-Hopf theorem.
But we will give its elementary proof for the convenience of non-experts on topology.
\begin{lem}\label{lem:PH}
Let $S \subset \Sigma$ be a compact smooth subsurface. 
We number the boundary components of $S$: 
$\partial S = \coprod^N_{k=1}\partial_kS$. Then we have 
$$
\sum^N_{k=1}\rot_f(\partial_kS) = \chi(S)
$$
for any $f \in F(\Sigma)$. Here we endow each $\partial_kS$ 
with the orientation induced by $S$, and $\chi(S)$ is the 
Euler characteristic of the surface $S$.
\end{lem}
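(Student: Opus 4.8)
\emph{Plan.} The rotation number $\rot_f(\ell)$ of a loop $\ell$ in $S$ depends only on the values of $f$ along $US\subset U\Sigma$, so it coincides with the rotation number of $\ell$ computed in $S$ with respect to the restricted framing $f|_S\in F(S)$; hence I may forget about $\Sigma$ and prove the statement for a framing of $S$ itself. Use $f|_S$ to identify $TS\cong S\times\mathbb{R}^2$, so that a unit vector field along a curve becomes a map to $S^1$ and $\rot_f(\ell)=\deg\big(\Vec\ell:S^1\to S^1\big)$. Along a boundary circle $\partial_kS$ with the orientation induced from $S$, the outward unit normal $\nu_k$ is obtained from the unit velocity $\Vec{\partial_kS}$ by a fixed rotation of $\mathbb{R}^2$ (by $\pm\pi/2$), hence is homotopic to it through nowhere-vanishing maps to $S^1$; therefore $\rot_f(\partial_kS)=\deg(\nu_k)$, and it suffices to prove $\sum_{k=1}^N\deg(\nu_k)=\chi(S)$.

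Next I would extend the collection $\{\nu_k\}$ to a smooth vector field $X$ on $S$ that equals the outward unit normal along $\partial S$, is nowhere zero on a collar of $\partial S$, and has only finitely many zeros $p_1,\dots,p_m$, all in the interior and all nondegenerate; such an $X$ exists by a routine transversality argument (extend the collar field arbitrarily, then perturb rel the collar). Cutting out a small open disk $D_i$ around each $p_i$, the smooth map $u:=X/|X|:S'\to S^1$ is defined on the compact surface $S':=S\smallsetminus\bigcup_i D_i$, whose boundary is $\partial S$ together with the $\partial D_i$ taken with reversed orientations. Since the angular form $d\theta$ on $S^1$ is closed, Stokes' theorem gives $\int_{\partial S'}u^*(d\theta)=0$, i.e.
\[
\sum_{k=1}^N\deg\big(u|_{\partial_kS}\big)=\sum_{i=1}^m\deg\big(u|_{\partial D_i}\big)=\sum_{i=1}^m\mathrm{ind}(p_i),
\]
the right-hand side being by definition the sum of the indices (computed in the trivialization $f|_S$) of the zeros of $X$. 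Since $u|_{\partial_kS}=\nu_k$, this says $\sum_k\rot_f(\partial_kS)=\sum_i\mathrm{ind}(p_i)$ for \emph{every} such $X$.

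It remains to evaluate $\sum_i\mathrm{ind}(p_i)$ for one convenient choice. Take $X=\nabla\phi$ for a Morse function $\phi:S\to[0,1]$ with $\phi^{-1}(1)=\partial S$, no critical points on $\partial S$, and $\nabla\phi$ outward along $\partial S$ (such a $\phi$ is assembled from a boundary collar and a Morse function on the interior, a standard fact for surfaces with boundary). The zeros of $\nabla\phi$ are the critical points of $\phi$, the index of $\nabla\phi$ at a critical point of Morse index $\lambda$ is $(-1)^\lambda$, and therefore $\sum_i\mathrm{ind}(p_i)=\sum_p(-1)^{\lambda(p)}=\chi(S)$; equivalently, one may build $S$ from disks by attaching bands and check directly that attaching a band introduces exactly one new zero, of index $-1$, while lowering $\chi$ by $1$. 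Combining this with the previous paragraph yields $\sum_{k=1}^N\rot_f(\partial_kS)=\chi(S)$. The only genuine work is this final step together with the bookkeeping around it: fixing the orientation conventions so that the \emph{induced} boundary orientation produces $+\chi(S)$ rather than $-\chi(S)$, checking that the trivialization-defined index of a zero agrees with the usual local index, and carrying out the Euler-characteristic count in a self-contained way; the first two paragraphs are essentially formal.
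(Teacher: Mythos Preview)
Your argument is correct, but it takes a genuinely different route from the paper's proof. The paper gives an elementary combinatorial argument: it chooses a cell decomposition of $S$ with all $0$-cells on $\partial S$ and smooth characteristic maps, and computes the sum $\sum_{n_\lambda=2}\rot_f(\varphi_\lambda(\partial D^2))$ in two ways. On one hand each $2$-cell boundary is regular homotopic to a small embedded loop, contributing $1$, so the sum equals $C_2$. On the other hand, the contributions of the two sides of each interior $1$-cell cancel, the boundary $1$-cells assemble into $\sum_k\rot_f(\partial_kS)$, and each vertex $e_\lambda$ contributes the turning correction $\tfrac12(d_\lambda-2)$; since $C_1=\tfrac12\sum d_\lambda$, this yields $C_2=\sum_k\rot_f(\partial_kS)+C_1-C_0$.

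Your approach is the classical Poincar\'e--Hopf/Hopf index argument: trivialize via $f$, replace tangents by outward normals, extend to a vector field with isolated zeros, use Stokes to equate $\sum_k\rot_f(\partial_kS)$ with the total index, and then compute the total index by Morse theory. This is more conceptual and connects the lemma directly to standard index theory, but it imports transversality and the Morse-theoretic identity $\sum_p(-1)^{\lambda(p)}=\chi(S)$ as black boxes. The paper's proof is deliberately more elementary and fully self-contained---it is explicitly written ``for the convenience of non-experts on topology''---at the cost of being a somewhat ad hoc counting argument. Both are fine; if you want to match the paper's spirit you would need to replace the Morse step by something that does not presuppose the Poincar\'e--Hopf theorem in disguise.
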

\begin{proof}
Let $\{(e_\lambda, \varphi_\lambda: D^{n_\lambda} \to S)\}_{\lambda\in 
\Lambda}$ be a finite cell decomposition of the surface $S$ such that
each characteristic map $\varphi_\lambda: D^{n_\lambda} \to 
\overline{e_\lambda} \subset S$
is a smooth embedding and each $0$-cell is located on the boundary $\partial S$.
We denote $C_i := \sharp\{\lambda\in \Lambda; \,\, n_\lambda = i\}$,
$0\leq i \leq 2$, so that $\chi(S) = C_2 - C_1 + C_0$.
Then we compute the sum $\sum_{n_\lambda=2} \rot_f(\varphi_\lambda(\partial 
D^{n_\lambda}))$. Since the loop $\varphi_\lambda(\partial D^{2})$ is 
regular homotopic to a small loop around the center of $e_\lambda$, 
the sum equals $C_2$. The contribution of both sides of each interior $1$-cell 
cancel each other, while the contribution of the boundary $1$-cells equals
the sum $\sum^N_{k=1}\rot_f(\partial_kS)$. The contribution of a vertex, i.e., 
a $0$-cell $e_\lambda$ equals $\dfrac12(d_\lambda -2)$, where $d_\lambda$ 
is the valency at the vertex $e_\lambda$. See Figure 2.
On the other hand, we have $C_1 = \dfrac12\sum_{n_\lambda=0}
d_\lambda$. Hence we obtain 
$$
C_2 = \left(\sum^N_{k=1}\rot_f(\partial_kS)\right) 
+ \dfrac12\sum_{n_\lambda=0}(d_\lambda-2)
= \left(\sum^N_{k=1}\rot_f(\partial_kS)\right) + C_1 - C_0,
$$
which proves the lemma.
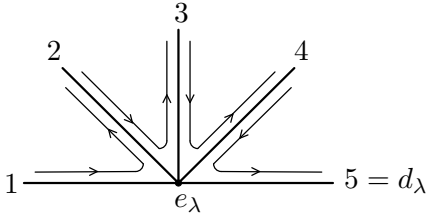
\begin{figure}
\begin{center}
{\unitlength 0.1in%
\begin{picture}(17.6000,9.9200)(1.9800,-11.0000)%
%
\special{pn 4}%
\special{sh 1}%
\special{ar 1100 1100 16 16 0 6.2831853}%
%
\special{pn 13}%
\special{pa 1100 1100}%
\special{pa 1100 300}%
\special{fp}%
%
\special{pn 13}%
\special{pa 300 1100}%
\special{pa 1900 1100}%
\special{fp}%
%
\special{pn 13}%
\special{pa 1100 1100}%
\special{pa 500 500}%
\special{fp}%
%
\special{pn 8}%
\special{pa 1000 920}%
\special{pa 600 520}%
\special{fp}%
%
\special{pn 8}%
\special{pa 1040 880}%
\special{pa 1040 360}%
\special{fp}%
%
\special{pn 8}%
\special{ar 1000 880 40 40 6.2831853 1.5707963}%
%
\special{pn 8}%
\special{pa 918 1000}%
\special{pa 516 602}%
\special{fp}%
%
\special{pn 8}%
\special{pa 879 1040}%
\special{pa 358 1043}%
\special{fp}%
%
\special{pn 8}%
\special{ar 878 1000 40 40 6.2831853 1.5458015}%
%
\special{pn 13}%
\special{pa 1100 1100}%
\special{pa 1700 500}%
\special{fp}%
%
\special{pn 8}%
\special{pa 1282 1000}%
\special{pa 1684 602}%
\special{fp}%
%
\special{pn 8}%
\special{pa 1321 1040}%
\special{pa 1842 1043}%
\special{fp}%
%
\special{pn 8}%
\special{ar 1322 1000 40 40 1.5957911 3.1415927}%
%
\special{pn 8}%
\special{pa 1200 920}%
\special{pa 1600 520}%
\special{fp}%
%
\special{pn 8}%
\special{pa 1160 880}%
\special{pa 1160 360}%
\special{fp}%
%
\special{pn 8}%
\special{ar 1200 880 40 40 1.5707963 3.1415927}%
%
\special{pn 8}%
\special{pa 680 1040}%
\special{pa 640 1060}%
\special{fp}%
\special{pa 680 1040}%
\special{pa 640 1020}%
\special{fp}%
%
\special{pn 8}%
\special{pa 1520 1040}%
\special{pa 1480 1060}%
\special{fp}%
\special{pa 1520 1040}%
\special{pa 1480 1020}%
\special{fp}%
%
\special{pn 8}%
\special{pa 1160 680}%
\special{pa 1140 640}%
\special{fp}%
\special{pa 1160 680}%
\special{pa 1180 640}%
\special{fp}%
%
\special{pn 8}%
\special{pa 1040 640}%
\special{pa 1020 680}%
\special{fp}%
\special{pa 1040 640}%
\special{pa 1060 680}%
\special{fp}%
%
\special{pn 8}%
\special{pa 738 820}%
\special{pa 780 837}%
\special{fp}%
\special{pa 738 820}%
\special{pa 749 863}%
\special{fp}%
%
\special{pn 8}%
\special{pa 862 786}%
\special{pa 820 769}%
\special{fp}%
\special{pa 862 786}%
\special{pa 851 743}%
\special{fp}%
%
\special{pn 8}%
\special{pa 1382 740}%
\special{pa 1340 757}%
\special{fp}%
\special{pa 1382 740}%
\special{pa 1371 783}%
\special{fp}%
%
\special{pn 8}%
\special{pa 1418 866}%
\special{pa 1460 849}%
\special{fp}%
\special{pa 1418 866}%
\special{pa 1429 823}%
\special{fp}%
\put(1.9800,-11.5000){\makebox(0,0)[lb]{$1$}}%
\put(4.1800,-4.6000){\makebox(0,0)[lb]{$2$}}%
\put(10.7800,-2.6000){\makebox(0,0)[lb]{$3$}}%
\put(16.9800,-4.6000){\makebox(0,0)[lb]{$4$}}%
\put(19.5800,-11.5000){\makebox(0,0)[lb]{$5=d_{\lambda}$}}%
\put(10.7800,-12.5000){\makebox(0,0)[lb]{$e_{\lambda}$}}%
\end{picture}}%
\end{center}
\caption{the case $d_{\lambda}=5$}
\end{figure}
\end{proof}
\par
Now suppose $\Sigma = \Sigma_{g, n+1}$ for $g,n \geq 0$. 
We number the boundary components: 
$\partial \Sigma = \coprod^n_{i=0}\partial_i\Sigma$.
Since any element of the group $\mathcal{M}(\Sigma)$ 
fixes the boundary pointwise, we can define a map
\begin{equation}\label{eq:defrho}
\rho: F(\Sigma)/\mathcal{M}(\Sigma) \to \Z^{n+1}, \quad
f \bmod \mathcal{M}(\Sigma) \mapsto 
(\rot_f(\partial_i\Sigma)+1)^n_{i=0}.
\end{equation}
Here, taking Lemma \ref{lem:omegarot} into account, 
we consider $\rot_f(\partial_i\Sigma)+1$ instead of 
the rotation number itself. 
By Lemmas \ref{lem:betti} and \ref{lem:PH}, we have 
\begin{equation}\label{eq:rimage}
\image\rho = \{(\nu_i)^n_{i=0} \in \Z^{n+1}; \,\,
{\sum}^n_{i=0}\nu_i = 2 - 2g \}.
\end{equation}
In the genus $0$ case, i.e., $\Sigma = \Sigma_{0,n+1}$, 
these lemmas imply
\begin{equation}\label{eq:genus0}
F(\Sigma)/\mathcal{M}(\Sigma) = F(\Sigma) \overset\rho\cong
\{(\nu_i)^n_{i=0} \in \Z^{n+1}; \,\,
{\sum}^n_{i=0}\nu_i = 2\}.
\end{equation}

We conclude this subsection by introducing an extra invariant for a framing,
which will be used for the genus $1$ case.
For $f \in F(\Sigma)$ we consider the ideal 
$\mathfrak{a}(f)$ in $\Z$ generated by the set 
$\{\rot_f(\gamma); \,\, \text{$\gamma$ is a non-separating simple closed 
curve in $\Sigma$}\}$, and define
$\tilde A(f) \in \Z_{\geq0}$ to be the non-negative generator of the ideal 
$\mathfrak{a}(f)$. It is clear that these are invariants under the action 
of the mapping class group $\mathcal{M}(\Sigma)$.
But, if $g\geq 2$, they are trivial invariants.
\begin{lem}\label{lem:genA}
If $g \geq 2$, 
we have $\tilde A(f) = 1$ for any $f \in F(\Sigma_{g, n+1})$. 
\end{lem}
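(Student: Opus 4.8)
The plan is to produce, in one stroke, an embedded subsurface all of whose boundary curves are non-separating simple closed curves, and then to read off a generator of the ideal $\mathfrak a(f)$ from the Poincar\'e--Hopf theorem (Lemma \ref{lem:PH}). Since $g \geq 2$, we have the two \emph{disjoint} non-separating simple closed curves $\alpha_1$ and $\alpha_2$ of Figure 1. I would join them by an embedded arc $\delta$ whose interior is disjoint from $\alpha_1\cup\alpha_2$, and take a regular neighborhood $P$ of $\alpha_1\cup\delta\cup\alpha_2$. With the band around $\delta$ chosen untwisted, this neighborhood $P$ is a pair of pants (i.e. $P \cong \Sigma_{0,3}$): one boundary circle is isotopic to $\alpha_1$, another is isotopic to $\alpha_2$, and the third, call it $\gamma$, is the oriented band sum of $\alpha_1$ and $\alpha_2$, so that $[\gamma] = \pm([\alpha_1]+[\alpha_2])$ in $H_1(\Sigma)$.

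The next step is to observe that all three boundary components of $P$ are non-separating. This is clear for the two copies of $\alpha_1$ and $\alpha_2$; for $\gamma$ it holds because $[\alpha_1]$ and $[\alpha_2]$ belong to a free basis of $H_1(\Sigma)\cong\Z^{2g+n}$, so $[\gamma] = \pm([\alpha_1]+[\alpha_2])\neq 0$, and a simple closed curve with non-zero homology class cannot separate. Hence the rotation number of each of these three curves, taken with the orientation induced by $P$, belongs to $\mathfrak a(f)$. Applying Lemma \ref{lem:PH} to the subsurface $S = P$ then gives
\[
\rot_f(\partial_1 P) + \rot_f(\partial_2 P) + \rot_f(\partial_3 P) = \chi(P) = -1 ,
\]
so $-1 \in \mathfrak a(f)$; therefore $\mathfrak a(f) = \Z$ and $\tilde A(f) = 1$.

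The one point that requires care is verifying that the regular neighborhood $P$ is genuinely a pair of pants ($\chi = -1$ with three boundary circles) and not a one-holed torus (which also has $\chi=-1$, but a single boundary circle): a regular neighborhood of a figure-eight can be either, depending on how the two loops meet at the node. Here one uses precisely that $\alpha_1$ and $\alpha_2$ are disjoint and that the band around $\delta$ may be taken untwisted, which forces the pair-of-pants case — and this is exactly where the hypothesis $g\geq 2$ is needed, to guarantee the existence of a second curve $\alpha_2$ disjoint from $\alpha_1$. Everything else is routine; in particular the choice of orientations on $\partial_k P$ is irrelevant to membership in the ideal $\mathfrak a(f)$. (Alternatively one could use a bounding pair of homologous non-separating curves cobounding a copy of $\Sigma_{1,2}$ to see that $2\in\mathfrak a(f)$, combined with the fact that one of $\alpha_1$, $\alpha_2$, or a band sum of them has odd rotation number by Lemma \ref{lem:omegarot}; but the pair-of-pants argument above is shorter.)
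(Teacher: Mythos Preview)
Your proof is correct and follows essentially the same route as the paper's. The paper simply asserts the existence of a pair of pants $P\subset\Sigma$ all three of whose boundary components are non-separating in $\Sigma$, and then applies Lemma~\ref{lem:PH} exactly as you do; you have supplied the explicit construction (regular neighborhood of $\alpha_1\cup\delta\cup\alpha_2$) that the paper leaves implicit, together with the verification that the third boundary curve is non-separating via its homology class.
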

\begin{proof} 
From the assumption, there is a smooth compact subsurface $P 
\subset \Sigma$  diffeomorphic to a pair of pants $\Sigma_{0,3}$ 
such that each of the three boundary components $\partial_iP$, 
$0 \leq i \leq 2$, is a non-separating curve in $\Sigma$.
Then, from Lemma \ref{lem:PH}, we have 
$\rot_f(\partial_0P) + \rot_f(\partial_1P) + \rot_f(\partial_2P) 
= \chi(P) = -1$, so that $-1 \in \mathfrak{a}(f)$. 
This proves the theorem.
\end{proof}

\subsection{The case $g \geq 2$}
In this subsection we consider $\Sigma = \Sigma_{g,n+1}$ 
for the case $g \geq 2$. 
In this case our computation modifies that in \cite{J80b}. 
Consider the map $\rho: 
F(\Sigma)/\mathcal{M}(\Sigma) \to \Z^{n+1}$ 
in (\ref{eq:defrho}).
\begin{thm}\label{thm:genus2} Suppose $g \geq 2$. 
Then, for any $\nu \in \image\rho = 
\{(\nu_i)^n_{i=0} \in \Z^{n+1}; \,\,
{\sum}^n_{i=0}\nu_i = 2-2g\}$, we have 
$$
\sharp\rho^{-1}(\nu) = 
\begin{cases}
1, & \text{if $\nu \in \image\rho \setminus (2\Z)^{n+1}$,}\\
2, & \text{if $\nu \in \image\rho\cap (2\Z)^{n+1}$.}
\end{cases}
$$
In the latter case, the two orbits are distinguished by the Arf invariant
of the spin structure $\xi_2(f)$.
\end{thm}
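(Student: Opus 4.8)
The plan is to reduce everything to the $\bmod 2$ classification of Section~1 and then to control the remaining ``$2H^1(\Sigma;\Z)$ worth'' of framings by Torelli mapping classes, roughly as in \cite{J80b} but adapted to framings and to a possibly disconnected boundary. First the easy inequalities: the numbers $\rot_f(\partial_j\Sigma)$, hence $\rho$, are $\mathcal{M}(\Sigma)$-invariant; and when all $\nu_j$ are even the spin structure $\xi_2(f)$ satisfies $i^*\omega_f=0$ (by Lemma~\ref{lem:omegarot}, $\omega_f([\partial_j\Sigma])=\rot_f(\partial_j\Sigma)+1\equiv 0$), so $\operatorname{Arf}(\xi_2(f))=\operatorname{Arf}(\omega_f)$ is defined and $\mathcal{M}(\Sigma)$-invariant by Section~1. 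Hence $\rho^{-1}(\nu)$ can split into more than one orbit only when $\nu\in(2\Z)^{n+1}$, and in that case both Arf values do occur: starting from any $f\in\rho^{-1}(\nu)$ (nonempty by \eqref{eq:rimage}) one may, using Lemma~\ref{lem:betti} and $g\ge 1$, change $\rot_f(\alpha_1)$ and$/$or $\rot_f(\beta_1)$ by $1$ while fixing all other $\rot_f(\alpha_i),\rot_f(\beta_i),\rot_f(\partial_j\Sigma)$; this stays in $\rho^{-1}(\nu)$ because $[\partial_0\Sigma]=-\sum_{j\ge 1}[\partial_j\Sigma]$, and it flips $\operatorname{Arf}(\omega_f)=\sum_i(\rot_f(\alpha_i)+1)(\rot_f(\beta_i)+1)$.

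For the converse, suppose $\rho(f_1)=\rho(f_2)=\nu$, and additionally $\operatorname{Arf}(\xi_2(f_1))=\operatorname{Arf}(\xi_2(f_2))$ when $\nu\in(2\Z)^{n+1}$; we must show $f_1$ and $f_2$ lie in one $\mathcal{M}(\Sigma)$-orbit. Their quadratic forms satisfy $i^*\omega_{f_1}=i^*\omega_{f_2}=:h$, determined by $(\nu_j\bmod 2)_j$, with $h[\partial\Sigma]_2\equiv\sum_j\nu_j\equiv 0$. If some $\nu_j$ is odd then $h\neq 0$, and Theorem~\ref{thm:mcgspin} gives that $\omega_{f_1},\omega_{f_2}$ lie in the same $\mathcal{M}(\Sigma)$-orbit; if all $\nu_j$ are even then $h=0$, the two orbits over $0$ are distinguished by $\operatorname{Arf}$, and the hypothesis again places $\omega_{f_1},\omega_{f_2}$ in one orbit. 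Pick $\psi\in\mathcal{M}(\Sigma)$ with $\omega_{f_2\circ\psi}=\omega_{f_1}$; since $\psi$ fixes $\partial\Sigma$ pointwise, $\rho(f_2\circ\psi)=\nu$, so after replacing $f_2$ by $f_2\circ\psi$ we may assume $\omega_{f_1}=\omega_{f_2}$ and $\rot_{f_1}(\partial_j\Sigma)=\rot_{f_2}(\partial_j\Sigma)$ for all $j$. Now set $c:=f_2-f_1\in H^1(\Sigma;\Z)$. From $\langle c,[\partial_j\Sigma]\rangle=0$ we get $i^*c=0$, i.e.\ $c\in\image j^*=\{x\cdot\; ;\; x\in H_1(\Sigma;\Z)\}$ (Poincar\'e--Lefschetz duality, the integral analogue of Section~1); from $\omega_{f_1}=\omega_{f_2}$ together with Lemmas~\ref{lem:omegarot} and~\ref{lem:betti} applied to the embedded-curve basis $\{\alpha_i,\beta_i,\partial_j\Sigma\}$ of $H_1(\Sigma)$ we get $c\in 2H^1(\Sigma;\Z)$; and since $H^1(\Sigma;\Z)/\image j^*\cong\image\, i^*\subset H^1(\partial\Sigma;\Z)$ is torsion-free, $c=2(x_0\cdot)$ for some $x_0\in H_1(\Sigma;\Z)$.

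It thus suffices to realize each class $2(a\cdot)$ with $a\in H_1(\Sigma;\Z)$ primitive as $f_1\circ\varphi-f_1$ for some $\varphi\in\mathcal{M}(\Sigma)$: such classes generate $\{x\cdot\}$ over $\Z$ (take $a=[\alpha_i],[\beta_i]$), and on the Torelli group the twisted cocycle $\varphi\mapsto f_1\circ\varphi-f_1$ is an honest homomorphism (Torelli elements act trivially on $H^1(\Sigma)$), so its image is a subgroup. Represent $a$ by a non-separating simple closed curve $\gamma$; using $g\ge 2$, band-sum $\gamma$ with the boundary of a genus-$1$ subsurface of $\Sigma$ disjoint from $\gamma$ to obtain a curve $\gamma'$ disjoint from $\gamma$ with $[\gamma']=a$ such that $\gamma\sqcup\gamma'$ cobounds an embedded $\Sigma_{1,2}\subset\Sigma$. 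Then the bounding pair map $\varphi:=t_\gamma t_{\gamma'}^{-1}$ lies in the Torelli group, and combining \eqref{eq:actrot}, the effect of a Dehn twist on rotation numbers (which refines mod $2$ the identity $m_{\omega_{f_1}}(T_a)=(\omega_{f_1}(a)+1)\,a\cdot$ of \eqref{eq:omegaa}), and Lemma~\ref{lem:PH} for $\Sigma_{1,2}$ — giving $\rot_{f_1}(\gamma)-\rot_{f_1}(\gamma')=\pm\chi(\Sigma_{1,2})=\mp 2$ — one computes $f_1\circ\varphi-f_1=\pm 2(a\cdot)$. Hence $c$ lies in the image, so some $\varphi$ has $f_1\circ\varphi=f_2$, and together with the first paragraph this yields the asserted cardinalities.

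The main obstacle is this last step: making precise the behavior of $\rot_f$ under Dehn twists and bounding pair maps on a surface with boundary, and producing a genus-$1$ bounding pair realizing a prescribed primitive homology class. This is exactly the point where the arguments of \cite{J80b} have to be modified, from spin structures on closed surfaces to framings on surfaces with boundary; everything else is a formal consequence of Theorem~\ref{thm:mcgspin} and the affine structure on $F(\Sigma)$.
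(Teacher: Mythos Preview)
Your proof is correct and follows essentially the same strategy as the paper: reduce modulo~$2$ via Theorem~\ref{thm:mcgspin} to obtain a difference in $2\,\image j^*$, then kill that difference using bounding-pair maps $t_\gamma t_{\gamma'}^{-1}$ with $\gamma\sqcup\gamma'=\partial\Sigma_{1,2}$, where Lemma~\ref{lem:PH} supplies the crucial $\pm 2$. The paper carries out exactly the step you flag as the ``main obstacle'' by constructing explicit curves $\hat\alpha_i,\hat\beta_i$ (conditions (i$'$)--(iii$''$) and Figure~3) and checking directly that $\xi(f\varphi'_i)-\xi(f)=2[\alpha_i]\cdot$, $\xi(f\varphi''_i)-\xi(f)=2[\beta_i]\cdot$ for \emph{every} $f$; otherwise the two arguments coincide.
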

\begin{proof} Let $f_1$ and $f_2 \in F(\Sigma)$ satisfy 
$\rho(f_1) = \rho(f_2)$ and $\operatorname{Arf}(\xi_2(f_1))
= \operatorname{Arf}(\xi_2(f_2))$ if $\rho(f_1) = \rho(f_2) 
\in (2\Z)^{n+1}$. Then, by Theorem \ref{thm:mcgspin}, 
we have 
\begin{equation}\label{eq:mod2fr}
\xi(f_2) - \xi(f_1\varphi_0) 
= 2(\sum^g_{i=1}\lambda_i[\alpha_i] + \sum^g_{i=1}\mu_i[\beta_i])\cdot 
\in H^1(\Sigma)
\end{equation}
for some $\varphi_0 \in \mathcal{M}(\Sigma)$ and 
$\lambda_i, \mu_i \in \Z$. 
Here $\alpha_i$ and $\beta_i$ are the simple closed curves 
shown in Figure 1.
Hence it suffices to construct
$\varphi'_i$ and $\varphi''_i \in \mathcal{M}(\Sigma)$ for each 
$1 \leq i \leq g$ such that 
\begin{equation}\label{eq:varphi}
\xi(f\varphi'_i) - \xi(f) = 2[\alpha_i]\cdot \quad\text{and}\quad
\xi(f\varphi''_i) - \xi(f) = 2[\beta_i]\cdot
\end{equation}
for any $f \in F(\Sigma)$.
We denote by $t_\gamma \in \mathcal{M}(\Sigma)$
the right-handed Dehn twist along a simple closed curve 
$\gamma$ in $\Sigma$.
\par
Now from the assumption $g\geq2$ there exist simple closed 
curves $\hat\alpha_i$ and $\hat\beta_i$ satisfying the conditions
\begin{enumerate}
\item[(i')] $\alpha_i$ and $\hat\alpha_i$ bound a smooth compact 
subsurface diffeomorphic to $\Sigma_{1,2}$.
\item[(i'')] $\beta_i$ and $\hat\beta_i$ bound a smooth compact 
subsurface diffeomorphic to $\Sigma_{1,2}$.
\item[(ii)] $\hat\alpha_i$ and $\hat\beta_i$ are disjoint from 
$\{\alpha_k, \beta_k\}_{k\neq i}$.
\item[(iii')] $\hat\alpha_i$ intersects with $\beta_i$ transversely
at a unique point.
\item[(iii'')] $\hat\beta_i$ intersects with $\alpha_i$ transversely
at a unique point.
\end{enumerate}
Choose a point on each component of $\partial\Sigma_{1,2}$. 
Then, by the disk theorem, two simple arcs connecting 
these two chosen points are mapped to each other by the 
action of the group $\mathcal{M}(\Sigma_{1,2})$. 
Similar transitivity holds also for the surface $\Sigma_{g-2, n+3}$. 
Hence, by the classification theorem of surfaces, the quadruples
$(\Sigma, \alpha_i, \hat\alpha_i, \beta_i)$ and 
$(\Sigma, \beta_i, \hat\beta_i, \alpha_i)$ are diffeomorphic to 
$(\Sigma, \gamma_1, \gamma_2, \gamma_0)$ in Figure 3 (a).
Then the simple closed curve ${t_{\gamma_2}}^{-1}t_{\gamma_1}
(\gamma_0)$ is computed as in Figure 3 (b), so that 
$\gamma_0$ and ${t_{\gamma_2}}^{-1}t_{\gamma_1}
(\gamma_0)$ bound a smooth compact subsurface diffeomorphic 
to $\Sigma_{1,2}$ By Lemma \ref{lem:PH}, we have 
$$
\left\vert\rot_{f{t_{\gamma_2}}^{-1}t_{\gamma_1}}(\gamma_0)
-\rot_f(\gamma_0)\right\vert = \vert\chi(\Sigma_{1,2})\vert = 2
$$
for any $f \in F(\Sigma)$. It is clear that
$\rot_{f{t_{\gamma_2}}^{-1}t_{\gamma_1}}(\gamma_1)
=\rot_f(\gamma_1)$. 
The mapping class ${t_{\gamma_2}}^{-1}t_{\gamma_1}$ is just a BP-map
in \cite{J80b}. 
\begin{figure}
\begin{center}
\input{fig2.tex}
\label{fig2}
\caption{}
\end{center}
\end{figure}
\par
Hence, if we take $\varphi'_i$ to be $t_{\hat\alpha_i}^{-1}t_{\alpha_i}$
or its inverse, then $\rot_{f\varphi'_i}(\beta_i)-\rot_f(\beta_i) = 2$ and 
$\rot_{f\varphi'_i}(\alpha_i)-\rot_f(\alpha_i) = 0$. 
From the condition (ii) above, 
$\rot_{f\varphi'_i}(\alpha_k)-\rot_f(\alpha_k) = 0$ and 
$\rot_{f\varphi'_i}(\beta_k)-\rot_f(\beta_k) = 0$ for $k\neq i$.
Hence $\xi(f\varphi'_i) - \xi(f) = 2[\alpha_i]\cdot$ as desired
in (\ref{eq:varphi}).
Similarly, if we take $\varphi''_i$ to be  
$t_{\beta_i}^{-1}t_{\beta_i}$ or its inverse, then $\varphi''_i$ 
satisfies (\ref{eq:varphi}). This proves the theorem.
\end{proof}

\subsection{The genus $1$ case}

Finally we study the genus $1$ case: $\Sigma = \Sigma_{1,n+1}$.
We write simply $\alpha = \alpha_1$ and $\beta = \beta_1$ 
shown in Figure 1, $\nu_j = \nu_j(f) := \rot_f(\partial_j\Sigma) + 1$,
$0 \leq j \leq n$, and take a closed regular neighbourhood $\Sigma'$
of the subset $\alpha(S^1)\cup \beta(S^1)$. 
It is diffeomorphic to $\Sigma_{1,1}$.
We begin by computing the invariant $\tilde A(f)$ for $f \in F(\Sigma)$.
\begin{lem}\label{lem:bf}
The ideal in $\Z$ generated by the set $\{\rot_f(\alpha), \rot_f(\beta), 
\nu_j(f); \,\, 0 \leq j \leq n\}$ equals the ideal $\mathfrak{a}(f)$.
In other words, $\tilde A(f)$ is the non-negative greatest common divisor 
of the set. 
\end{lem}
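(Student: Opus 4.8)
The plan is to prove the two inclusions $\mathfrak{b}(f)\subseteq\mathfrak{a}(f)$ and $\mathfrak{a}(f)\subseteq\mathfrak{b}(f)$, where I write $\mathfrak{b}(f)$ for the ideal generated by $\{\rot_f(\alpha),\rot_f(\beta),\nu_0(f),\dots,\nu_n(f)\}$ and $\mathfrak{a}(f)$ is the ideal defined just before the lemma. For $\mathfrak{b}(f)\subseteq\mathfrak{a}(f)$ the point is that $\alpha$ and $\beta$ are themselves non-separating, so $\rot_f(\alpha),\rot_f(\beta)\in\mathfrak{a}(f)$, and that each $\nu_j(f)$ is a difference of rotation numbers of non-separating curves. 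Concretely, I would fix $j$, take an embedded arc $\eta$ from $\alpha$ to $\partial_j\Sigma$ whose interior misses $\alpha\cup\partial_j\Sigma\cup\beta$, let $\gamma'_j$ be the oriented band-sum of $\alpha$ and $\partial_j\Sigma$ along $\eta$ (so $[\gamma'_j]=[\alpha]+[\partial_j\Sigma]$; $\gamma'_j$ is a simple closed curve meeting $\beta$ transversely once, hence non-separating), and apply Lemma~\ref{lem:PH} to the pair of pants $N(\alpha\cup\eta\cup\partial_j\Sigma)\subset\Sigma$. With the induced orientations this gives $\rot_f(\gamma'_j)=\rot_f(\alpha)+\rot_f(\partial_j\Sigma)+1=\rot_f(\alpha)+\nu_j(f)$, so $\nu_j(f)=\rot_f(\gamma'_j)-\rot_f(\alpha)\in\mathfrak{a}(f)$.

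For $\mathfrak{a}(f)\subseteq\mathfrak{b}(f)$, let $\gamma$ be an arbitrary non-separating simple closed curve; I must show $\rot_f(\gamma)\in\mathfrak{b}(f)$. Write $[\gamma]=p[\alpha]+q[\beta]+\sum_{j\ge1}c_j[\partial_j\Sigma]\in H_1(\Sigma)$. Band-summing the current curve with a parallel copy of $\partial_j\Sigma$ as above keeps it a non-separating simple closed curve (adding $[\partial_j\Sigma]$ does not change the class in $H_1(\Sigma,\partial\Sigma)^{(2)}$), adds $[\partial_j\Sigma]$ to its homology class, and changes $\rot_f$ by $+\nu_j(f)\in\mathfrak{b}(f)$; since $[\partial_0\Sigma]=-\sum_{j\ge1}[\partial_j\Sigma]$, band-summing with $\partial_0\Sigma$ subtracts $1$ from every $c_j$ at the cost of $+\nu_0(f)$. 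Doing $\max(0,\max_j c_j)$ such $\partial_0$-moves and then the appropriate number of $\partial_j$-moves ($j\ge1$), I reach a non-separating simple closed curve $\gamma_\star$ with $[\gamma_\star]=p[\alpha]+q[\beta]$ and $\rot_f(\gamma_\star)\equiv\rot_f(\gamma)\pmod{\mathfrak{b}(f)}$. The step I expect to be the main obstacle is the next one: that such a curve, whose homology class lies in $\langle[\alpha],[\beta]\rangle$, can be isotoped into the genus-one subsurface $\Sigma'=N(\alpha\cup\beta)\cong\Sigma_{1,1}$. I would prove this by putting $\gamma_\star$ in minimal position with respect to $\partial\Sigma'$ and showing that, because all the $\partial_j$-coefficients of $[\gamma_\star]$ vanish, an innermost arc of $\gamma_\star\cap(\Sigma\setminus\Sigma')$ in the planar surface $\Sigma\setminus\Sigma'\cong\Sigma_{0,n+2}$ would have to cobound a disk with $\partial\Sigma'$, contradicting minimality; hence there are no such arcs and $\gamma_\star\subset\Sigma'$.

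Granting this, $\gamma_\star$ is a non-separating simple closed curve in $\Sigma_{1,1}$, so $[\gamma_\star]=p[\alpha]+q[\beta]$ with $\gcd(p,q)=1$, and up to isotopy in $\Sigma'$ it is the unique such curve. Let $f_0$ be the flat framing on $\Sigma'$ obtained by restricting the translation framing of $T^2\supset\Sigma'$; every non-separating simple closed curve of $\Sigma'$ is isotopic to a flat geodesic, which has constant velocity direction, so $\rot_{f_0}\equiv 0$ on all of them. Writing $f|_{\Sigma'}=f_0+h$ with $h\in H^1(\Sigma')$ and using the affine formula $\rot_{f|_{\Sigma'}}(c)=\rot_{f_0}(c)+\langle h,[c]\rangle$ from \S2.1, one gets $\langle h,[\alpha]\rangle=\rot_f(\alpha)$ and $\langle h,[\beta]\rangle=\rot_f(\beta)$, hence $\rot_f(\gamma_\star)=\langle h,[\gamma_\star]\rangle=p\,\rot_f(\alpha)+q\,\rot_f(\beta)\in\mathfrak{b}(f)$, and therefore $\rot_f(\gamma)\in\mathfrak{b}(f)$. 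This gives $\mathfrak{a}(f)=\mathfrak{b}(f)$, and since $\tilde A(f)$ is the non-negative generator of $\mathfrak{a}(f)$ it is the non-negative greatest common divisor of $\{\rot_f(\alpha),\rot_f(\beta),\nu_j(f)\}$. Apart from the isotopy of $\gamma_\star$ into $\Sigma'$, everything is bookkeeping with Lemma~\ref{lem:PH} and the affine structure on $F(\Sigma')$.
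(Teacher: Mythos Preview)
Your inclusion $\mathfrak{b}(f)\subseteq\mathfrak{a}(f)$ is exactly the paper's: band-sum $\alpha$ with $\partial_j\Sigma$ and apply Lemma~\ref{lem:PH} to the resulting pair of pants. For the opposite inclusion, however, your argument has a genuine gap at precisely the place you flagged as the main obstacle: the isotopy claim is false.

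The assertion is that a non-separating simple closed curve $\gamma_\star$ with $[\gamma_\star]=p[\alpha]+q[\beta]$ can always be isotoped into $\Sigma'$. This fails already for $\Sigma_{1,2}$. Take any separating simple closed curve $c$ that essentially intersects $\alpha$; such $c$ exist (for instance $c=\varphi(\partial\Sigma')$ where $\varphi$ is the point-push of $\partial_1$ along $\beta$: then $\varphi^{-1}(\alpha)$ has homology $[\alpha]\pm[\partial_1]$, hence cannot lie in $\Sigma'$, so $i(c,\alpha)=i(\partial\Sigma',\varphi^{-1}(\alpha))>0$). The curve $t_c(\alpha)$ is non-separating with $[t_c(\alpha)]=[\alpha]$ but is not isotopic to $\alpha$. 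Since the only non-separating simple closed curve inside $\Sigma'$ whose $\Sigma$-homology class is $[\alpha]$ is $\alpha$ itself, $t_c(\alpha)$ cannot be isotoped into $\Sigma'$. Your innermost-arc argument breaks because the vanishing of all the $c_j$'s is only a \emph{global} homological cancellation: an innermost arc of $\gamma_\star\cap(\Sigma\setminus\Sigma')$ in the planar piece may perfectly well cut off one or several of the $\partial_j$'s, with the contributions of different arcs cancelling in $H_1$.

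The repair, and this is what the paper does, is not to aim for $c_j=0$ at all. Fill in $\partial_1,\dots,\partial_n$ to obtain $\hat\Sigma\cong\Sigma_{1,1}$; in $\hat\Sigma$ one can genuinely isotope $\gamma$ into $\Sigma'$, and each time the isotopy sweeps across a filled-in disc it corresponds in $\Sigma$ to a band-sum with the relevant $\partial_j$, changing $\rot_f$ by $\pm\nu_j\in\mathfrak{b}(f)$. One thus reaches a non-separating $\gamma'\subset\Sigma'$ with $\rot_f(\gamma')\equiv\rot_f(\gamma)\pmod{\mathfrak{b}(f)}$ without ever invoking the false isotopy statement. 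From there your flat-framing computation $\rot_f(\gamma')=p\,\rot_f(\alpha)+q\,\rot_f(\beta)$ is correct and is a nice alternative to the paper's route, which instead carries $\gamma'$ to $\alpha$ by Dehn twists $t_\alpha,t_\beta$ and tracks the change in rotation number via the formula $\rot_f(t_\beta\gamma'')-\rot_f(\gamma'')=([\gamma'']\cdot[\beta])\rot_f(\beta)$.
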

\begin{proof}
We denote the ideal given above by $\mathfrak{b}(f)$.
For each $0 \leq j \leq n$, we choose a band connecting 
$\alpha$ and $\partial_j\Sigma$ 
to obtain a non-separating 
simple closed curve $\alpha^{(j)}$ such that $\alpha$, $\partial_j\Sigma$ 
and $\alpha^{(j)}$ bound a pair of pants. Then we have 
$\rot_f(\alpha^{(j)}) = \rot_f(\alpha) + \nu_j$, so that we obtain 
$\mathfrak{b}(f) \subset \mathfrak{a}(f)$.\par
Let $\gamma$ be any non-separating simple closed curve in $\Sigma$. 
When the curve $\gamma$ crosses the boundary component $\partial_j\Sigma$, 
the rotation number changes by $\pm(\rot_f(\partial_j\Sigma) + 1) 
= \pm\nu_j$. Hence there exists a non-separating simple closed curve
$\gamma'$ in $\Sigma'$ such that $\rot_f(\gamma) - \rot_f(\gamma') 
\in \mathfrak{b}(f)$. The curve $\gamma'$ is mapped to $\alpha$ 
by an element of the subgroup generated by the Dehn twists $t_\alpha$ 
and $t_\beta$. For any simple closed curve $\gamma''$ in $\Sigma$, 
we have 
\begin{equation}\label{eq:tbeta}
\rot_f(t_\beta(\gamma'')) - \rot_f(\gamma'') = 
([\gamma'']\cdot[\beta])\rot_f(\beta) \in \mathfrak{b}(f)
\end{equation}
and 
$\rot_f(t_\alpha(\gamma'')) - \rot_f(\gamma'') \in \mathfrak{b}(f)$.
Hence we have $\rot_f(\gamma') \in \rot_f(\alpha) + \mathfrak{b}(f)
= \mathfrak{b}(f)$. This proves $\mathfrak{a}(f) \subset \mathfrak{b}(f)$, 
and completes the proof of the lemma.
\end{proof}
\begin{cor}\label{cor:Arf}
If $\rot_f(\partial_j\Sigma)$ is odd for any $0 \leq j\leq n$, we have
$$
\operatorname{Arf}(\xi_2(f)) \equiv \tilde A(f) +1 \pmod{2}.
$$
\end{cor}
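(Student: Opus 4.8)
The plan is to reduce the whole statement modulo $2$ and feed in two ingredients already available: the closed formula for the Arf invariant in terms of $\rot_f(\alpha)$ and $\rot_f(\beta)$, and the description of $\tilde A(f)$ as a greatest common divisor from Lemma \ref{lem:bf}. First I would observe that the hypothesis that $\rot_f(\partial_j\Sigma)$ is odd for every $j$ guarantees that $\operatorname{Arf}(\xi_2(f))$ is in fact defined: by Lemma \ref{lem:omegarot} we have $\omega_f([\partial_j\Sigma]) = \rot_f(\partial_j\Sigma)+1 \equiv 0$ in $\Z/2$, so $i^*\omega_f = 0 \in H^1(\partial\Sigma)^{(2)}$, which is exactly the condition under which \eqref{eq:defArf} applies.

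Next I would specialize \eqref{eq:defArf} to $g = 1$ and apply Lemma \ref{lem:omegarot} to the two curves $\alpha$ and $\beta$, obtaining
$$
\operatorname{Arf}(\xi_2(f)) = \omega_f([\alpha])\,\omega_f([\beta])
\equiv (\rot_f(\alpha)+1)(\rot_f(\beta)+1) \pmod 2 .
$$
On the other hand, by Lemma \ref{lem:bf} the number $\tilde A(f)$ is the non-negative greatest common divisor of the finite set $\{\rot_f(\alpha),\ \rot_f(\beta),\ \nu_0(f),\dots,\nu_n(f)\}$, and under the present hypothesis each $\nu_j(f) = \rot_f(\partial_j\Sigma)+1$ is even. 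Hence $\tilde A(f)$ is even if and only if both $\rot_f(\alpha)$ and $\rot_f(\beta)$ are even, i.e.\ if and only if $(\rot_f(\alpha)+1)(\rot_f(\beta)+1) \equiv 1 \pmod 2$; in the remaining case at least one of $\rot_f(\alpha),\rot_f(\beta)$ is odd, so $\tilde A(f)$ is odd and $(\rot_f(\alpha)+1)(\rot_f(\beta)+1) \equiv 0 \pmod 2$. Comparing the two cases gives $\tilde A(f)+1 \equiv (\rot_f(\alpha)+1)(\rot_f(\beta)+1) \equiv \operatorname{Arf}(\xi_2(f)) \pmod 2$, which is the asserted congruence.

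There is essentially no serious obstacle: the substantive input, the identification of $\tilde A(f)$ with the gcd of $\rot_f(\alpha)$, $\rot_f(\beta)$ and the $\nu_j(f)$, has already been established in Lemma \ref{lem:bf}. The only points needing care are checking that $\operatorname{Arf}(\xi_2(f))$ is well defined under the hypothesis (done above) and keeping the parity bookkeeping straight when the $\nu_j(f)$ drop out of the gcd modulo $2$; both are routine.
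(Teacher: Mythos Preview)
Your argument is correct and follows the same route as the paper: the paper's proof consists of the single observation, via Lemma~\ref{lem:omegarot}, that $\operatorname{Arf}(\xi_2(f)) \equiv (\rot_f(\alpha)+1)(\rot_f(\beta)+1)\pmod 2$, with the link to $\tilde A(f)$ via Lemma~\ref{lem:bf} left implicit. You have simply spelled out the well-definedness check and the parity bookkeeping that the paper omits.
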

\begin{proof} By Lemma \ref{lem:omegarot} we have
$
\operatorname{Arf}(\xi_2(f)) \equiv (\rot_f(\alpha)+1)(\rot_f(\beta)+1) \pmod{2}
$.
\end{proof}

\begin{thm}\label{genusone}
Suppose $g = 1$, and $f_1, f_2 \in F(\Sigma_{1,n+1})$.
Then $f_1$ and $f_2$ belong to the same 
$\mathcal{M}(\Sigma_{1,n+1})$-orbit, if and only if
$f_1$ and $f_2$ satisfy both of the following conditions 
\begin{enumerate}
\item[(i)] $
\rot_{f_1}(\partial_j\Sigma) = \rot_{f_2}(\partial_j\Sigma)
$
for any $0 \leq j \leq n$.
\item[(ii)] $\tilde A(f_1) = \tilde A(f_2)\in \mathbb{Z}_{\geq0}$.
\end{enumerate}
\end{thm}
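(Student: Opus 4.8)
The ``only if'' direction is immediate: the rotation numbers $\rot_f(\partial_j\Sigma)$ and the invariant $\tilde A(f)$ are invariant under $\mathcal{M}(\Sigma_{1,n+1})$ by \S2.1, so any two framings in one orbit satisfy (i) and (ii). For the converse the plan is to reduce the problem to an elementary computation with greatest common divisors. By Lemma \ref{lem:betti} a framing $f$ of $\Sigma_{1,n+1}$ is determined by the tuple $(\rot_f(\alpha),\rot_f(\beta),\rot_f(\partial_1\Sigma),\dots,\rot_f(\partial_n\Sigma))\in\Z^{n+2}$; assuming (i) and using Lemma \ref{lem:PH} for the component $j=0$, the framings $f_1$ and $f_2$ agree on every boundary rotation number, so they differ at most in the pair $p:=\rot_f(\alpha)$, $q:=\rot_f(\beta)$. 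Since the boundary rotation numbers are $\mathcal{M}(\Sigma)$-invariant, $\mathcal{M}(\Sigma)$ acts on the set $\Z^2$ of pairs $(p,q)$ attached to the common boundary data, and it will be enough to carry both $(\rot_{f_1}(\alpha),\rot_{f_1}(\beta))$ and $(\rot_{f_2}(\alpha),\rot_{f_2}(\beta))$ by this action to the single pair $(\tilde A(f_1),0)=(\tilde A(f_2),0)$. Here I use $\tilde A(f)=\gcd(\rot_f(\alpha),\rot_f(\beta),d)$ with $d:=\gcd(\nu_1,\dots,\nu_n)$ and $\nu_j=\rot_f(\partial_j\Sigma)+1$, which follows from Lemma \ref{lem:bf} once one notes that $\nu_0=-\sum_{j\ge1}\nu_j$ by Lemma \ref{lem:PH}.

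To move $(p,q)$ around I would use two families of mapping classes. First, the mapping classes supported in a regular neighbourhood $\Sigma'\cong\Sigma_{1,1}$ of $\alpha\cup\beta$ fix every boundary component and act on $(p,q)$ through $SL(2,\Z)$ by the standard linear action; to see this I would reduce to $\Sigma_{1,1}$ itself, where the flat framing $f_\flat$ induced by an embedding $\Sigma_{1,1}\hookrightarrow T^2$ is fixed by the whole mapping class group: every non-separating simple closed curve in $\Sigma_{1,1}$ is isotopic to a flat geodesic and hence has rotation number $0$ with respect to $f_\flat$, so $f_\flat\varphi$ and $f_\flat$ agree on $\alpha$ and $\beta$ and therefore coincide by Lemma \ref{lem:betti}; an affine action with a fixed point is linear, and in the coordinates $(\rot(\alpha),\rot(\beta))$ centred at $f_\flat$ it is the $SL(2,\Z)$-action on $H^1(\Sigma_{1,1})$, which is surjective by Dehn--Lickorish. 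Secondly, for each $1\le j\le n$ I would take $\varphi_j\in\mathcal{M}(\Sigma_{1,n+1})$ to be the push of the boundary component $\partial_j\Sigma$ along a loop meeting $\alpha$ once and disjoint from $\beta$ and from the other boundary components; such a $\varphi_j$ fixes $\beta$ and every $\partial_i\Sigma$ and sends $\alpha$ to a simple closed curve $\alpha^{(j)}$ cobounding a pair of pants with $\alpha$ and $\partial_j\Sigma$, so by the pair-of-pants identity $\rot_f(\alpha^{(j)})=\rot_f(\alpha)+\nu_j$ used in the proof of Lemma \ref{lem:bf}, the class $\varphi_j$ acts on $(p,q)$ by $(p,q)\mapsto(p+\nu_j,q)$ (up to an overall orientation, which an $SL(2,\Z)$-move absorbs).

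Granting these two families, the endgame is the Euclidean algorithm. Using the first family I would reduce $(\rot_{f_i}(\alpha),\rot_{f_i}(\beta))$ to $(g_i,0)$ with $g_i=\gcd(\rot_{f_i}(\alpha),\rot_{f_i}(\beta))$ (and $(0,0)$ stays fixed). From $(g,0)$ one reaches $(0,g)$ by the first family, then $(\nu_j,g)$ by $\varphi_j$, then $(\gcd(\nu_j,g),0)$ again by the first family; iterating over $j=1,\dots,n$ yields $(\gcd(g_i,\nu_1,\dots,\nu_n),0)=(\gcd(g_i,d),0)=(\tilde A(f_i),0)$. By (ii) the two numbers $\tilde A(f_i)$ coincide, so $f_1$ and $f_2$ lie in a common $\mathcal{M}(\Sigma_{1,n+1})$-orbit.

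I expect the main obstacle to be the second family: constructing the boundary-pushing maps $\varphi_j$ so that each slides $\alpha$ to a curve cobounding a pair of pants with $\alpha$ and $\partial_j\Sigma$ while fixing $\beta$ and every $\partial_i\Sigma$ up to isotopy, and then checking carefully their effect on the two relevant rotation numbers. A secondary point requiring care is the claim in the first family that $f_\flat$ is a genuine global fixed point, which rests on the classification of isotopy classes of non-separating simple closed curves in $\Sigma_{1,1}$ by their homology classes.
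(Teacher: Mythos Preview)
Your proposal is correct and follows the same route as the paper: reduce to showing that every framing can be moved so that $(\rot_f(\alpha),\rot_f(\beta))=(\tilde A(f),0)$, using an $SL_2(\Z)$-action coming from Dehn twists in the genus-$1$ subsurface $\Sigma'$ together with a shift-by-$\nu_j$ move, then finish with the Euclidean algorithm. The paper's execution is more direct on both families: for the first it simply invokes the twist formula \eqref{eq:tbeta}, $\rot_f(t_\gamma(\gamma''))-\rot_f(\gamma'')=([\gamma'']\cdot[\gamma])\rot_f(\gamma)$, to see immediately that $t_\alpha,t_\beta$ generate the standard $SL_2(\Z)$-action on $(\rot_f(\alpha),\rot_f(\beta))$, bypassing your flat-framing fixed-point argument; for the second it writes the explicit bounding-pair map $t_\alpha^{-1}t_{\alpha^{(j)}}$ and computes $\rot_{f'}(t_\alpha^{-1}t_{\alpha^{(j)}}(\beta))=\rot_{f'}(\beta)-\nu_j$ in two lines, which dissolves what you flag as ``the main obstacle'' (your boundary push is this same map up to an $SL_2(\Z)$-conjugation and a twist along $\partial_j\Sigma$).
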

\begin{proof} If $f_1$ and $f_2$ belong to the same 
$\mathcal{M}(\Sigma)$-orbit, then it is clear that 
they satisfy both of the conditions. Hence it suffices to prove the following:
For any $f \in F(\Sigma)$ we have $(\rot_{f\varphi}(\alpha), 
\rot_{f\varphi}(\beta)) = (\tilde A(f), 0) \in \Z^2$ for some $\varphi
\in \mathcal{M}(\Sigma)$. \par
From the formula (\ref{eq:tbeta}) and the similar one for $t_\alpha$, 
the actions of $t_\alpha$ and $t_\beta$ on the row vectors 
$(\rot_f(\alpha), \rot_f(\beta)) \in \Z^2$ generate the standard right 
action of $SL_2(\Z)$ on $\Z^2$. By the Euclidean algorithm,
the vectors $(a_1, b_1)$ and $(a_2, b_2) \in \Z^2$ belong to the same
$SL_2(\Z)$-orbit if and only if $\gcd(a_1, b_1)=\gcd(a_2, b_2) \in \Z$.
\par
We denote $d:= \gcd(\rot_f(\alpha), \rot_f(\beta))$ and 
$c:= \gcd(\nu_j(f);\,\, 0 \leq j \leq n)$. 
Then $\tilde A(f) = \gcd(c,d)$. 
By the Euclidean algorithm, we have 
$(\rot_{f\varphi_1}(\alpha), \rot_{f\varphi_1}(\beta)) = (d,0)$
for some $\varphi_1 \in \mathcal{M}(\Sigma)$.
Recall the non-separating simple closed curve $\alpha^{(j)}$ 
introduced in the proof of Lemma \ref{lem:bf}. 
For any $f' \in F(\Sigma)$ we have 
$$
\aligned
& \rot_{f'}({t_\alpha}^{-1}t_{\alpha^{(j)}}(\alpha)) = 
\rot_{f'}(\alpha), \quad\text{and}\\
& \rot_{f'}({t_\alpha}^{-1}t_{\alpha^{(j)}}(\beta)) = 
\rot_{f'}(t_{\alpha^{(j)}}(\beta)) + ([\beta]\cdot[\alpha])\rot_{f'}(\alpha)\\
& = \rot_{f'}(\beta) - ([\beta]\cdot[\alpha])\rot_{f'}(\alpha^{(j)})
+ ([\beta]\cdot[\alpha])\rot_{f'}(\alpha)\\
& = \rot_{f'}(\beta) - \nu_j(f').
\endaligned
$$
Hence there exists an element $\varphi_2$ in the subgroup 
generated by the elements ${t_\alpha}^{-1}t_{\alpha^{(j)}}$, $0 \leq j \leq n$, 
such that $(\rot_{f\varphi_1\varphi_2}(\alpha), 
\rot_{f\varphi_1\varphi_2}(\beta)) = (d,c)$. 
Recall $\tilde A(f) = \gcd(c,d)$. 
By the Euclidean algorithm, we have 
$(\rot_{f\varphi_1\varphi_2\varphi_3}(\alpha), 
\rot_{f\varphi_1\varphi_2\varphi_3}(\beta)) = (\tilde A(f),0)$
for some $\varphi_3 \in \mathcal{M}(\Sigma)$.
This proves the theorem.
\end{proof}
\begin{cor}\label{cor:genus1}
For $\nu = (\nu_j)^n_{j=0} \in \Z^{n+1}\setminus\{0\}$ with 
$\sum^n_{j=0}\nu_j= 0$, the inverse image $\rho^{-1}(\nu)$ 
is parametrized by the positive divisors of $\gcd(\nu_j; 
0 \leq j \leq n)$, while $\rho^{-1}(0)$ by the non-negative 
integers $\Z_{\geq0}$.
\end{cor}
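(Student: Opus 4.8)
The plan is to deduce the corollary from Theorem~\ref{genusone}, Lemma~\ref{lem:betti} and Lemma~\ref{lem:bf} by a short number-theoretic computation. Fix $\nu = (\nu_j)_{j=0}^n \in \Z^{n+1}$ with $\sum_{j=0}^n \nu_j = 0$, so that $\nu \in \image\rho$, and set
$$
F_\nu := \{f \in F(\Sigma_{1,n+1}); \,\, \rot_f(\partial_j\Sigma) = \nu_j - 1 \text{ for all } 0 \le j \le n\}.
$$
Since the boundary rotation numbers are $\mathcal{M}(\Sigma_{1,n+1})$-invariant, $F_\nu$ is an $\mathcal{M}(\Sigma_{1,n+1})$-stable subset of $F(\Sigma_{1,n+1})$ whose image in the quotient is exactly $\rho^{-1}(\nu)$. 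By Theorem~\ref{genusone} two framings in $F_\nu$ lie in the same orbit if and only if they have equal values of $\tilde A$, so $\tilde A$ induces a bijection of $\rho^{-1}(\nu)$ onto the subset $\{\tilde A(f);\,\, f \in F_\nu\}\subset\Z_{\geq0}$. It remains to identify this subset.

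First I would describe $F_\nu$ concretely. The classes $[\alpha],[\beta],[\partial_1\Sigma],\dots,[\partial_n\Sigma]$ form a free basis of $H_1(\Sigma_{1,n+1})$ (the boundary classes satisfy the single relation $\sum_{j=0}^n[\partial_j\Sigma]=0$, so $[\partial_0\Sigma]$ is redundant), and each of the corresponding curves is an embedding. Hence Lemma~\ref{lem:betti} gives a bijection $F(\Sigma_{1,n+1})\overset{\cong}{\to}\Z^{n+2}$ via $f\mapsto(\rot_f(\alpha),\rot_f(\beta),\rot_f(\partial_1\Sigma),\dots,\rot_f(\partial_n\Sigma))$, while Lemma~\ref{lem:PH} forces $\rot_f(\partial_0\Sigma)=\nu_0-1$ once $\rot_f(\partial_j\Sigma)$ for $1\le j\le n$ are prescribed. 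Therefore $f\mapsto(\rot_f(\alpha),\rot_f(\beta))$ restricts to a bijection $F_\nu\overset{\cong}{\to}\Z^2$.

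Finally, Lemma~\ref{lem:bf} gives $\tilde A(f)=\gcd(\rot_f(\alpha),\rot_f(\beta),\nu_0,\dots,\nu_n)$ for $f\in F_\nu$, so writing $c:=\gcd(\nu_j;\,\,0\le j\le n)$ the set $\{\tilde A(f);\,\,f\in F_\nu\}$ equals $\{\gcd(a,b,c);\,\,(a,b)\in\Z^2\}$ by the previous paragraph. If $\nu\neq 0$ then $c\ge 1$: every $\gcd(a,b,c)$ divides $c$, and each positive divisor $e$ of $c$ is attained at $(a,b)=(e,0)$, so the set is precisely the collection of positive divisors of $c=\gcd(\nu_j;\,\,0\le j\le n)$. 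If $\nu=0$ then $c=0$, so $\tilde A(f)=\gcd(\rot_f(\alpha),\rot_f(\beta))$, which takes every value in $\Z_{\geq0}$ as $(a,b)$ runs over $\Z^2$ (take $(d,0)$ for $d\ge1$ and $(0,0)$ for $0$). Combined with the bijection from the first paragraph, this gives the two cases of the statement.

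I do not expect a real obstacle here: the corollary is bookkeeping on top of Theorem~\ref{genusone} and Lemmas~\ref{lem:betti} and~\ref{lem:bf}. The only point needing care is the assertion that $\{[\alpha],[\beta]\}\cup\{[\partial_j\Sigma]\}_{j=1}^n$ is a free basis of $H_1(\Sigma_{1,n+1})$ and that fixing the rotation numbers along $\alpha$ and $\beta$ together with the value of $\rho$ pins down a unique framing; both follow from the standard homotopy type of $\Sigma_{1,n+1}$ and Lemma~\ref{lem:PH}.
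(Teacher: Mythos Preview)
Your proof is correct and follows essentially the same approach as the paper: the paper's own proof is a two-line sketch invoking Lemma~\ref{lem:betti} and the observation that $\tilde A(f)$ must be a positive divisor of the $\gcd$ when $\nu\neq 0$, and you have simply fleshed out these steps explicitly using Theorem~\ref{genusone} and Lemma~\ref{lem:bf}.
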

\begin{proof} If $\nu\neq0$, then $\tilde A(f)$ is a positive 
divisor of the $\gcd$. The corollary follows from 
Lemma \ref{lem:betti}. See also the equation 
(\ref{eq:rimage}).
\end{proof}

The following is related to the formality problem of the Turaev cobracket
on genus $1$ surfaces \cite{AKKN}.
\begin{cor}\label{cor:realize1}
For $f \in F(\Sigma_{1,n+1})$, there exists a mapping class $\varphi
\in \mathcal{M}(\Sigma_{1,n+1})$ satisfying 
$\rot_{f\varphi}(\alpha) = \rot_{f\varphi}(\beta) = 0$, 
if and only if $\tilde A(f) = \gcd(\nu_j; 0 \leq j \leq n)$.
\end{cor}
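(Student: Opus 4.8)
The plan is to deduce the statement from the proof of Theorem~\ref{genusone} together with Lemma~\ref{lem:bf}. Write $c:=\gcd(\nu_j(f);\,0\le j\le n)\in\Z_{\geq0}$, and recall that for every $\varphi\in\mathcal{M}(\Sigma_{1,n+1})$ one has $\nu_j(f\varphi)=\nu_j(f)$ and $\tilde A(f\varphi)=\tilde A(f)$. The ``only if'' direction is then immediate: if $\varphi$ satisfies $\rot_{f\varphi}(\alpha)=\rot_{f\varphi}(\beta)=0$, then applying Lemma~\ref{lem:bf} to $f\varphi$ shows that $\mathfrak{a}(f\varphi)$ is generated by $\{0,0\}\cup\{\nu_j(f\varphi);\,0\le j\le n\}$, hence $\tilde A(f\varphi)=\gcd(\nu_j(f\varphi);\,0\le j\le n)$, and the invariances turn this into $\tilde A(f)=c$.

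For the ``if'' direction I would first apply the construction in the proof of Theorem~\ref{genusone} to obtain $\varphi_1\in\mathcal{M}(\Sigma_{1,n+1})$ with $(\rot_{f\varphi_1}(\alpha),\rot_{f\varphi_1}(\beta))=(\tilde A(f),0)=(c,0)$, so that only the first coordinate has to be killed. Thinking of $\mathcal{M}(\Sigma_{1,n+1})$ as acting on the row vector $(\rot_f(\alpha),\rot_f(\beta))\in\Z^2$, that proof already shows that $\langle t_\alpha,t_\beta\rangle$ realizes the standard right $SL_2(\Z)$-action and that ${t_\alpha}^{-1}t_{\alpha^{(j)}}$ realizes the translation $(a,b)\mapsto(a,b-\nu_j)$. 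I would then produce the ``transposed'' moves $(a,b)\mapsto(a\mp\nu_j,b)$, either by conjugating ${t_\alpha}^{-1}t_{\alpha^{(j)}}$ by the element of $\langle t_\alpha,t_\beta\rangle$ realizing the $SL_2(\Z)$-matrix that interchanges the two basis vectors up to sign, or by building, exactly as the $\alpha^{(j)}$ were built in the proof of Lemma~\ref{lem:bf}, a non-separating simple closed curve $\beta^{(j)}$ such that $\beta$, $\partial_j\Sigma$ and $\beta^{(j)}$ cobound a pair of pants and checking that ${t_\beta}^{-1}t_{\beta^{(j)}}$ fixes $\rot(\beta)$ and changes $\rot(\alpha)$ by $\pm\nu_j$. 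Writing $c=\sum_j m_j\nu_j$ with $m_j\in\Z$ by B\'ezout and composing $\varphi_1$ with the corresponding product of these moves and their inverses then yields $\varphi$ with $(\rot_{f\varphi}(\alpha),\rot_{f\varphi}(\beta))=(c-\sum_j m_j\nu_j,0)=(0,0)$.

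Conceptually, both implications are captured by the observation that the image of $\mathcal{M}(\Sigma_{1,n+1})$ in the affine transformation group of $\Z^2$ contains $SL_2(\Z)\ltimes c\Z^2$, whose orbit of $0$ is precisely $c\Z^2$, so that $0$ is reachable from $(\rot_f(\alpha),\rot_f(\beta))$ if and only if the latter lies in $c\Z^2$, if and only if $c\mid\gcd(\rot_f(\alpha),\rot_f(\beta))$, if and only if (by Lemma~\ref{lem:bf}) $\tilde A(f)=\gcd\!\big(c,\gcd(\rot_f(\alpha),\rot_f(\beta))\big)=c$. The only step that is not already contained in Theorem~\ref{genusone} is the construction of the curves $\beta^{(j)}$ (or the equivalent conjugation trick), and I expect that to be the sole, rather mild, obstacle; everything else is the Euclidean-algorithm bookkeeping carried out there.
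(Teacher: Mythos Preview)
Your argument is correct, and the ``only if'' direction is exactly what the paper does (compute $\tilde A$ of a framing with vanishing $\alpha,\beta$-rotation numbers via Lemma~\ref{lem:bf} and use invariance). The ``if'' direction, however, takes a different route from the paper's.

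The paper's proof is non-constructive and shorter: it invokes Lemma~\ref{lem:betti} to produce a framing $f_\bullet$ with $\rot_{f_\bullet}(\alpha)=\rot_{f_\bullet}(\beta)=0$ and $\nu_j(f_\bullet)=\nu_j(f)$, reads off $\tilde A(f_\bullet)=\gcd(\nu_j)$ from Lemma~\ref{lem:bf}, and then simply quotes Theorem~\ref{genusone} to conclude that $f$ and $f_\bullet$ lie in the same orbit if and only if $\tilde A(f)=\tilde A(f_\bullet)$. No new curves, no extra computations. Your approach instead re-enters the proof of Theorem~\ref{genusone}, reaches $(\tilde A(f),0)$, and then manufactures the missing translation in the first coordinate via the curves $\beta^{(j)}$ (or, equivalently, by conjugating ${t_\alpha}^{-1}t_{\alpha^{(j)}}$ by an element of $\langle t_\alpha,t_\beta\rangle$ realizing $\left(\begin{smallmatrix}0&1\\-1&0\end{smallmatrix}\right)$). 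This is sound---the conjugation computation gives exactly the translation $(a,b)\mapsto(a-\nu_j,b)$, and the $\beta^{(j)}$ construction works by the obvious symmetry---and it has the virtue of producing $\varphi$ explicitly. The cost is the extra step you flagged; the paper avoids it entirely by appealing to the classification rather than rebuilding the orbit by hand.
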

\begin{proof}
By Lemma \ref{lem:betti}, there exists a framing $f_\bullet \in F(\Sigma_{1,n+1})$
such that $\rot_{f_\bullet}(\alpha) = \rot_{f_\bullet}(\beta) = 0$ and
$\nu_j(f_\bullet) = \nu_j(f)$ for any $0 \leq j \leq n$.
Then $\tilde A(f_\bullet) = \gcd(\nu_j; 0 \leq j \leq n)$ from Lemma \ref{lem:bf}.
Hence the corollary follows from Theorem \ref{genusone}.
\end{proof}

\subsection{The relative genus $1$ case}

We conclude this paper by some discussion about the relative version
\cite{RW14}, which we will need to describe to the self-intersection
of an immersed path.  
Here we fix a framing of the tangent bundle restricted to 
the boundary $\delta: T\Sigma\vert_{\partial\Sigma}\overset\cong\to
\partial\Sigma \times\mathbb{R}^2$, and consider the set $F(\Sigma, \delta)$
of homotopy classes of framings $f: T\Sigma\overset\cong\to 
\Sigma\times\mathbb{R}^2$ which extend the framing $\delta$, 
where all the homotopies we consider fix $\delta$ pointwise.
By Lemma \ref{lem:PH} and some obstruction theory, the set 
$F(\Sigma, \delta)$ is not empty if and only if $\sum^n_{j=0} \rot_\delta(
\partial_j\Sigma) = \chi(\Sigma)$. For the rest, we assume 
$F(\Sigma, \delta) \neq \emptyset$. 
In this setting, for any $f \in F(\Sigma, \delta)$, 
we can consider the rotation number $\rot_f(\ell) \in \mathbb{R}$
of an immersed path $\ell$ connecting two different points 
on the boundary $\partial \Sigma$. 
We denote by $1 \in S^1$ the unit element of $S^1 = SO(2)$. 
The group $[(\Sigma, \partial\Sigma), (S^1, 1)] = H^1(\Sigma, \partial\Sigma; \mathbb{Z}) 
= H^1(\Sigma, \partial\Sigma)$ acts on the set $F(\Sigma, \delta)$ freely and 
transitively. For $1 \leq j \leq n$, we choose a point $*_j \in \partial_j\Sigma$ 
and a simple arc $\eta_j$ from a point on $\partial_0\Sigma$ to $*_j$
such that each $\eta_j$ is disjoint from $\{\alpha_i, \beta_i\}^g_{i=1} \cup
\{\eta_k\}_{k\neq j}$, and transverse to $\partial_0\Sigma$ and
$\partial_j\Sigma$. Then the homology classes 
$\{[\alpha_i], [\beta_i]\}^g_{i=1} \cup
\{[\eta_j]\}^n_{j=1}$ constitute a free basis of $H_1(\Sigma, \partial\Sigma)$. 
The evaluation map
\begin{equation}
\operatorname{Ev}: F(\Sigma, \delta) \to \mathbb{Z}^{2g+n}, \quad 
f \mapsto ((\rot_f(\alpha_i), \rot_f(\beta_i))^g_{i=1}, 
(\lceil\rot_f(\eta_j)\rceil)^n_{j=1})
\label{eq:relbij}
\end{equation}
is bijective, and compatible with the action of $H^1(\Sigma, \partial\Sigma)$. 
Here $\lceil\rot_f(\eta_j)\rceil \in \mathbb{Z}$ is the ceiling of the rotation number
$\rot_f(\eta_j) \in \mathbb{R}$. Randal-Williams \cite{RW14} introduced 
the generalized Arf invariant $\gArf(f) \in \mathbb{Z}/2$ by 
\begin{equation}\label{eq:gArf}
\gArf(f) := \sum^g_{i=1}(\rot_f(\alpha_i)+1)(\rot_f(\beta_i)+1)
+ \sum^n_{j=1} \nu_j\lceil\rot_f(\eta_j)\rceil \bmod {2} \in \mathbb{Z}/2,
\end{equation}
which is denoted by $A(f)$ in the original paper \cite{RW14}.
The mapping class group $\mathcal{M}(\Sigma)$ acts on the set $F(\Sigma, \delta)$
in a natural way. As was proved in \cite{RW14}, the generalized 
Arf invariant is invariant under the mapping class group action for any $g \geq 0$, 
and, if $g \geq 2$, 
the orbit set $F(\Sigma, \delta)/\mathcal{M}(\Sigma)$ is of cardinality $2$ or $0$ 
for any $\delta$, and described by the generalized Arf invariant.\par

Now we consider the case $g=1$. We use the notation in \S2.3.
The invariant $\tilde A(f)$ is related to the generalized Arf invariant
$\gArf(f)$ as follows.\par
\begin{enumerate}
\item[(1)] Suppose $\tilde A(f)$ is even. Then $\rot_f(\alpha)$, $\rot_f(\beta)$ 
and all of $\nu_j$'s are even. Hence $\gArf(f) \equiv (\rot_f(\alpha)+1)
(\rot_f(\beta)+1) \equiv 1 \bmod 2$. If $f_1 \in F(\Sigma, \delta)$ is given 
by $\operatorname{Ev}(f_1) = ((\tilde A(f), 0), (0, \dots, 0))$, then we have 
$\tilde A(f_1) = \tilde A(f)$.
\item[(2)] Next we consider the case $\tilde A(f)$ is odd and $\gArf(f) = 0 \bmod 2$. 
If $f_2 \in F(\Sigma, \delta)$ is given 
by $\operatorname{Ev}(f_2) = ((\tilde A(f), 0), (0, \dots, 0))$, then we have 
$\tilde A(f_2) = \tilde A(f)$ and $\gArf(f_2) = 0 \bmod 2$.
\item[(3)] Finally assume $\tilde A(f)$ is even and $\gArf(f) = 1 \bmod 2$. 
Then we have $\nu_j \equiv 1\pmod{2}$ for some $1 \leq j \leq n$.
If not, $\rot_f(\alpha)$ or $\rot_f(\beta)$ are odd, so that 
$\gArf(f) = 0 \bmod 2$. This contradicts the assumption. 
Let $j_0$ be the maximum $j$ satisfying $\nu_j \equiv 1\pmod{2}$.
If $f_3 \in F(\Sigma, \delta)$ is given 
by $\operatorname{Ev}(f_3) = ((\tilde A(f), 0), (0, \dots, 0, \overset{j_0}{\breve{1}}, 0, \dots, 0))$, then we have 
$\tilde A(f_3) = \tilde A(f)$ and $\gArf(f_3) = 1 \bmod 2$.
\end{enumerate}
From Lemma \ref{lem:bf} the invariant $\tilde A(f)$ can be realized to be any non-negative 
divisor of $\gcd(\nu_j; 0 \leq j \leq n)$.
Here we agree that any integer is a divisor of $0$.

\begin{thm}\label{relzero} Suppose $g = 1$ and 
$F(\Sigma, \delta) \neq\emptyset$.
Then the orbit set 
$F(\Sigma, \delta)/\mathcal{M}(\Sigma)$ is parametrized by 
the invariant $\tilde A(f)$ and the generalized Arf invariant 
$\gArf(f)$. More precisely, for any $f \in F(\Sigma)$, we have 
$f = f_k\circ\varphi$ for some $\varphi \in \mathcal{M}(\Sigma)$ 
and $k= 1,2,3$. Here we choose $f_k$ according to 
the invariants $\tilde A(f)$ and $\gArf(f)$ as stated above.
\end{thm}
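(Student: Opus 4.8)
The plan is to reduce, for an arbitrary $f \in F(\Sigma_{1,n+1}, \delta)$, the tuple $\operatorname{Ev}(f)$ of (\ref{eq:relbij}) by mapping classes to the tuple $\operatorname{Ev}(f_k)$ prescribed by the trichotomy (1)--(3), where $k$ is determined by $\tilde A(f)$ and $\gArf(f)$; since both of these are $\mathcal{M}(\Sigma)$-invariant and the three prescriptions are pairwise distinguished by $(\tilde A \bmod 2,\ \gArf)$ in the ranges where they occur (as recorded in (1)--(3)), this is exactly what the theorem asserts. I would carry out the reduction in two stages, writing $c_j := \lceil \rot_f(\eta_j)\rceil$ and tracking how the $c_j$ evolve.

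Stage 1: normalizing $(\rot_f(\alpha), \rot_f(\beta))$. Applying the argument of Theorem \ref{genusone} to the underlying absolute framing produces a word $\varphi_1$ in $t_\alpha$, $t_\beta$ and the maps $t_\alpha^{-1}t_{\alpha^{(j)}}$ with $(\rot_{f\varphi_1}(\alpha), \rot_{f\varphi_1}(\beta)) = (\tilde A(f), 0)$; the same $\varphi_1$ acts on $F(\Sigma, \delta)$ with the same effect on $(\rot\alpha, \rot\beta)$. After replacing $f$ by $f\varphi_1$ we may thus assume $\operatorname{Ev}(f) = ((\tilde A, 0), (c_1,\dots,c_n))$ with $\tilde A := \tilde A(f)$, where now the $c_j$ are arbitrary; by Lemma \ref{lem:bf} we moreover have $\tilde A = \gcd(\tilde A, 0, \nu_0,\dots,\nu_n)$, so $\tilde A \mid \nu_j$ for all $j$.

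Stage 2: normalizing $(c_1,\dots,c_n)$. I would use three families of mapping classes, all fixing $(\rot\alpha,\rot\beta) = (\tilde A,0)$, with effects computed from Lemma \ref{lem:PH}: (a) the boundary twist $t_{\partial_j\Sigma}$ ($1\le j\le n$), sending $c_j \mapsto c_j + (\nu_j-1)$ and fixing the rest; (b) a twist $t_{\gamma_j}$ along a simple closed curve $\gamma_j$ with $[\gamma_j]=-[\partial_j\Sigma]$ that cobounds with $\partial_j\Sigma$ a copy of $\Sigma_{1,2}$ swallowing the genus-one handle (a regular neighbourhood of $\alpha(S^1)\cup\beta(S^1)\cup\partial_j\Sigma$ together with a connecting arc), so by Lemma \ref{lem:PH} one has $\rot_f(\gamma_j) = -\nu_j-1$, and $t_{\gamma_j}$ sends $c_j\mapsto c_j+(\nu_j+1)$ and fixes the rest; and (c) for $1\le j<k\le n$ the separating twist $t_{\gamma_{jk}}$ along a curve $\gamma_{jk}$ cutting off a pair of pants also bounded by $\partial_j\Sigma$ and $\partial_k\Sigma$, which by Lemma \ref{lem:PH} has $\rot_f(\gamma_{jk}) = 1-\nu_j-\nu_k$ and sends $c_j\mapsto c_j+(\nu_j+\nu_k-1)$, $c_k\mapsto c_k+(\nu_j+\nu_k-1)$, fixing the rest. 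From (a) and (b) the realizable changes of $c_j$ alone form the subgroup $\gcd(\nu_j-1,\nu_j+1)\Z = \gcd(\nu_j-1,2)\Z$: this is all of $\Z$ when $\nu_j$ is even and is $2\Z$ when $\nu_j$ is odd. Hence every $c_j$ with $\nu_j$ even can be set to $0$, and every $c_j$ with $\nu_j$ odd can be brought into $\{0,1\}$, after which $\sum_{j=1}^n\nu_j c_j \equiv \sum_{j:\ \nu_j\text{ odd}} c_j \equiv \gArf(f) + \tilde A + 1 \bmod 2$ by (\ref{eq:gArf}); in particular the number of indices $j$ with $c_j=1$ has the parity of $\gArf(f)+\tilde A+1$. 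If that parity is $0$, one uses the moves (c) (which, when $\nu_j,\nu_k$ are both odd, flip the parities of $c_j$ and $c_k$ simultaneously) to cancel those $1$'s in pairs and reach $(0,\dots,0)$, i.e.\ $\operatorname{Ev}(f_1)$ when $\tilde A$ is even and $\operatorname{Ev}(f_2)$ when $\tilde A$ is odd; if that parity is $1$, one uses (c) to cancel all but one and then to slide the surviving $1$ to the largest index $j_0$ with $\nu_{j_0}$ odd, reaching $\operatorname{Ev}(f_3)$.

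The main obstacle I anticipate is the bookkeeping in Stage 2 — namely, checking that the curves $\gamma_j$ and $\gamma_{jk}$ can be taken disjoint from $\alpha$, $\beta$ and from the $\eta_k$ with $k$ not among the indicated indices (so that the stated effects on $\operatorname{Ev}$ are exact), and then assembling the arithmetic so as to conclude that the lattice of realizable changes of $(c_1,\dots,c_n)$ is \emph{exactly} $\{v\in\Z^n : \sum_j\nu_j v_j\equiv 0\bmod 2\}$, i.e.\ that $\gArf$ is the only obstruction. The genuinely new ingredient compared both with the absolute genus-one case (Theorem \ref{genusone}) and with Randal-Williams' relative case $g\ge 2$ is the handle-threading twist in (b): it is available precisely because $g=1$ provides exactly one handle, and it is what forces the residual tuple to be constrained solely by $\gArf$, which in turn is the reason the statement must single out the index $j_0$.
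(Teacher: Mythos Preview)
Your proof is correct and follows the same two-stage architecture as the paper: first normalize $(\rot_f(\alpha),\rot_f(\beta))$ to $(\tilde A,0)$ via Theorem~\ref{genusone}, then reduce the arc invariants $c_j=\lceil\rot_f(\eta_j)\rceil$ to the canonical tuple, using boundary twists and a pair-of-pants curve (your move~(c), the paper's curve $\lambda$) for the parity pairing. The genuine difference is your move~(b). Where the paper builds $\psi_j = t_{\alpha^{(j)}}t_\alpha^{-1-\nu_j/A}t_{\partial_j\Sigma}^{-1}$ and its $\tau$-conjugate $\psi'_j$ --- needing the involution $\tau$ and the correction $t_\alpha^{-1\mp\nu_j/A}$ (hence the divisibility $A\mid\nu_j$) to keep $\rot_f(\beta)$ fixed, and obtaining increments $-A-1$ and $A-1$ --- you instead twist along a single \emph{separating} curve $\gamma_j$ bounding a $\Sigma_{1,2}$ that swallows the handle together with $\partial_j\Sigma$. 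Because $\gamma_j$ is disjoint from $\alpha,\beta$, no correction or $\tau$ is needed, and the increment $\pm(\nu_j+1)$ combines with $t_{\partial_j\Sigma}$ to give step size $\gcd(\nu_j-1,\nu_j+1)\in\{1,2\}$ directly. This is a cleaner implementation of the same idea; the bookkeeping you flag (choosing $\gamma_j,\gamma_{jk}$ disjoint from $\alpha,\beta$ and from the irrelevant $\eta_k$) is routine once one cuts the planar complement $\Sigma\setminus\Sigma'$ along the $\eta_k$'s. One small remark: your phrase ``the lattice of realizable changes is \emph{exactly} $\{v:\sum_j\nu_jv_j\equiv0\bmod2\}$'' asserts both directions, but for the theorem you only need the inclusion $\supseteq$ (reachability of the canonical forms); the other inclusion is the $\mathcal{M}(\Sigma)$-invariance of $\gArf$, which you invoke separately.
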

\begin{proof}
We may assume each $\eta_j$ is disjoint from the subsurface $\Sigma'
(\cong \Sigma_{1,1})$, a regular neighborhood of $\alpha(S^1)\cup\beta(S^1)$.
There is an element $\tau \in \mathcal{M}(\Sigma)$ whose support is in $\Sigma'$
such that $(\rot_{f\circ\tau}(\alpha), \rot_{f\circ\tau}(\beta)) = 
(-\rot_{f}(\alpha), -\rot_{f}(\beta))$ for any $f \in F(\Sigma, \delta)$. In fact, 
$\tau$ can be obtained as some product of Dehn twists $t_\alpha$ and $t_\beta$.
In particular, we have $\rot_{f\circ\tau}(\eta_j) = \rot_f(\eta_j)$ for any 
$0 \leq j \leq n$. \par
Next we consider a framing $f \in F(\Sigma, \delta)$ which satisfies 
$\operatorname{Ev}(f) = ((A, 0), (\rho_1, \dots, \rho_n))$ 
for some $\rho_j \in \mathbb{Z}$. Here we assume $A = \tilde A(f)$. 
We remark that $A$ divides any $\nu_j$, $0 \leq j \leq n$. 
Recall the non-separating simple closed curve $\alpha^{(j)}$ 
introduced in the proof of Lemma \ref{lem:bf}. 
Here we choose the band connecting $\alpha$ and $\partial_j\Sigma$ 
to be disjoint from any $\eta_k$, $1 \leq k \leq n$. Then, the curve 
$\alpha^{(j)}$ is disjoint from $\eta_k$ for $k \neq j$, and we may assume
$\alpha^{(j)}$ and $\eta_j$ intersect transversely to each other at the unique point.
We define $\psi_j := t_{\alpha^{(j)}}{t_\alpha}^{-1-(\nu_j/A)}
{t_{\partial_j\Sigma}}^{-1} \in \mathcal{M}(\Sigma)$. Since $\rot_f(\alpha^{(j)})
= A + \nu_j$ and $\rot_f(\partial_j\Sigma) = \nu_j-1$, we have 
$\rot_{f\circ\psi_j}(\eta_j) - \rot_f(\eta_j) = -A -\nu_j + \nu_j -1 = -A -1$ and
$\rot_{f\circ\psi_j}(\beta) - \rot_f(\beta) = A +\nu_j -(1+(\nu_j/A))A = 0$. 
Clearly we have 
$\rot_{f\circ\psi_j}(\alpha) = \rot_f(\alpha) = A$ and
$\rot_{f\circ\psi_j}(\eta_k) - \rot_f(\eta_k) = 0$ for $k\neq j$.
Moreover we define $\psi'_j := \tau t_{\alpha^{(j)}}{t_\alpha}^{-1+(\nu_j/A)}
{t_{\partial_j\Sigma}}^{-1}\tau^{-1} \in \mathcal{M}(\Sigma)$. 
Similarly we have 
$\rot_{f\circ\psi'_j}(\eta_j) - \rot_f(\eta_j) = A-1$, 
$\rot_{f\circ\psi'_j}(\alpha) = \rot_f(\alpha) = A$, 
$\rot_{f\circ\psi'_j}(\beta) = \rot_f(\beta)$ and
$\rot_{f\circ\psi'_j}(\eta_k) - \rot_f(\eta_k) = 0$ for $k\neq j$.
As a consequence of the construction of $\psi_j$ and $\psi'_j$, 
there is some $\varphi_j \in \mathcal{M}(\Sigma)$ and $\epsilon_j \in \{0,1\}$ 
such that $\operatorname{Ev}(f\circ\varphi_j) = ((A, 0),(\rho_1,\dots, \rho_{j-1}, 
\epsilon_j, \rho_{j+1}, \dots, \rho_n))$ and $\epsilon_j \equiv \rho_j \pmod{2}$.
In fact, $\gcd\{-A-1, A-1\}$ divides $2$.\par
Now we consider an arbitrary element $f_0 \in F(\Sigma, \delta)$. 
We denote $A = \tilde A(f_0)$. 
From the proof of Theorem \ref{genusone}, we have 
$\operatorname{Ev}(f_0\circ\varphi_0) = ((A, 0), (\rho^0_1, \dots, \rho^0_n))$ for some $\varphi_0\in \mathcal{M}(\Sigma)$ and $\rho^0_j \in \mathbb{Z}$. \par
(1) Suppose $A = \tilde A(f)$ is even. 
Then we may assume each $\rho^0_j$ is even. 
In fact, $\rot_{f\circ t_{\partial_j\Sigma}}(\eta_j) 
- \rot_f(\eta_j) = -\rot_f(\partial_j\Sigma) =  - \nu_j + 1$ is odd for any 
$f \in F(\Sigma, \delta)$. 
Hence we have some suitable product $\tilde\varphi \in \mathcal{M}(\Sigma)$ 
of $\varphi_j \in \mathcal{M}(\Sigma)$'s stated above such that
$\operatorname{Ev}(f_0\circ\varphi_0\circ\tilde\varphi) = ((A, 0), (0, \dots, 0))$.
This means $f_0\circ\varphi_0\circ\tilde\varphi = f_1 \in F(\Sigma, \delta)$,
as was desired.\par
(2) Assume $A = \tilde A(f)$ is odd and $\gArf(f) = 0\bmod2$.
Then we have $0 = \gArf(f_0) = \gArf(f_0\circ\varphi_0) \equiv 
A + 1 + \sum^n_{j=1}\nu_j\lceil \rot_{f_0\circ\varphi_0}(\eta_j)\rceil
\equiv \sum^n_{j=1}\nu_j\lceil \rot_{f_0\circ\varphi_0}(\eta_j)\rceil
\pmod{2}$.
Hence there are some $1 \leq j_1 < j_2 < \cdots < j_{2m} \leq n$
such that $\nu_{j_s}\lceil \rot_{f_0\circ\varphi_0}(\eta_{j_s})\rceil 
\equiv 1 \pmod{2}$ and 
$\nu_{j}\lceil \rot_{f_0\circ\varphi_0}(\eta_{j})\rceil 
\equiv 0 \pmod{2}$ if $j \not\in \{j_1, j_2, \dots, j_{2m}\}$. 
We choose a band connecting $\partial_{j_1}(\Sigma)$ and 
$\partial_{j_2}(\Sigma)$ disjoint from $\alpha$, $\beta$ and $\eta_k$ for 
$k\neq j_1, j_2$ to obtain a separating simple closed curve $\lambda$ such that
$\partial_{j_1}(\Sigma)$, $\partial_{j_2}(\Sigma)$ and $\lambda$ bound a pair of 
pants. Then $\rot_{f_0\circ\varphi_0}(\lambda) = \nu_{j_1} + \nu_{j_2} - 1$ 
is odd. Hence we have 
$\lceil \rot_{f_0\circ\varphi_0\circ t_\lambda}(\eta_{j_1})\rceil \equiv 
\lceil \rot_{f_0\circ\varphi_0\circ t_\lambda}(\eta_{j_2})\rceil \equiv 0 
\pmod{2}$. By similar consideration we obtain 
some $\varphi' \in \mathcal{M}(\Sigma)$ such that 
$\lceil \rot_{f_0\circ\varphi_0\circ \varphi'}(\eta_{j})\rceil \equiv 0 
\pmod{2}$ for any $1 \leq j \leq n$.
Hence we have some suitable product $\tilde\varphi \in \mathcal{M}(\Sigma)$ 
of $\varphi_j \in \mathcal{M}(\Sigma)$'s such that
$\operatorname{Ev}(f_0\circ\varphi_0\circ \varphi'\circ\tilde\varphi) 
= ((A, 0), (0, \dots, 0))$.
This means $f_0\circ\varphi_0\circ \varphi'\circ\tilde\varphi = f_2
\in F(\Sigma, \delta)$,
as was desired.\par
(3)  Assume $A = \tilde A(f)$ is odd and $\gArf(f) = 1\bmod2$.
Then $\sum^n_{j=1}\nu_j\lceil \rot_{f_0\circ\varphi_0}(\eta_j)\rceil
\equiv 1 \pmod{2}$.
Hence there are some $1 \leq j_1 < j_2 < \cdots < j_{2m-1} \leq n$
such that $\nu_{j_s}\lceil \rot_{f_0\circ\varphi_0}(\eta_{j_s})\rceil 
\equiv 1 \pmod{2}$ and 
$\nu_{j}\lceil \rot_{f_0\circ\varphi_0}(\eta_{j})\rceil 
\equiv 0 \pmod{2}$ if $j \not\in \{j_1, j_2, \dots, j_{2m-1}\}$. 
In a similar way to (2), we obtain 
some $\varphi' \in \mathcal{M}(\Sigma)$ such that 
$\lceil \rot_{f_0\circ\varphi_0\circ \varphi'}(\eta_{j_0})\rceil \equiv 1 
\pmod{2}$ and 
$\lceil \rot_{f_0\circ\varphi_0\circ \varphi'}(\eta_{j})\rceil \equiv 0 
\pmod{2}$ for any $j \neq j_0$.
Hence we have some suitable product $\tilde\varphi \in \mathcal{M}(\Sigma)$ 
of $\varphi_j \in \mathcal{M}(\Sigma)$'s such that
$\operatorname{Ev}(f_0\circ\varphi_0\circ \varphi'\circ\tilde\varphi) 
= ((A, 0), (0, \dots, 0, \overset{j_0}{\breve{1}}, 0, \dots, 0))$.
This means $f_0\circ\varphi_0\circ \varphi'\circ\tilde\varphi = f_3
\in F(\Sigma, \delta)$,
as was desired.\par
This completes the proof of the theorem.
\end{proof}

The situation for the relative genus $0$ case is elementary, but 
seems too complicated to describe by some simple invariants.

\vskip 10mm
\noindent
Department of Mathematical Sciences, \\
University of Tokyo \\
3-8-1 Komaba, Meguro-ku, Tokyo, \\
153-8914, JAPAN. \\
kawazumi@ms.u-tokyo.ac.jp\\
\\
\end{document}